\newtheorem{theorem}{Theorem}[section]
\newaliascnt{headcor}{headthm}
\newaliascnt{headconj}{headthm}
\newaliascnt{corollary}{theorem}
\newtheorem{corollary}[corollary]{Corollary}
\newaliascnt{claim}{theorem}
\newaliascnt{lemma}{theorem}
\newtheorem{lemma}[lemma]{Lemma}
\newaliascnt{conjecture}{theorem}
\newaliascnt{proposition}{theorem}
	\def\MR#1{}
\theoremstyle{definition}
\newaliascnt{definition}{theorem}
\newtheorem{definition}[definition]{Definition}
\newaliascnt{notation}{theorem}
\newaliascnt{example}{theorem}
\newtheorem{example}[example]{Example}
\newaliascnt{examples}{theorem}
\newaliascnt{remark}{theorem}
\newtheorem{remark}[remark]{Remark}
\newaliascnt{question}{theorem}
\newaliascnt{questions}{theorem}
\newaliascnt{problem}{theorem}
\newaliascnt{construction}{theorem}
\newaliascnt{setup}{theorem}
\newaliascnt{algorithm}{theorem}
\newaliascnt{observation}{theorem}
\newaliascnt{defprop}{theorem}
\newaliascnt{fact}{theorem}
\newcommand{\bfu}{\mathbf{u}}
\newcommand{\bfv}{\mathbf{v}}
\newcommand{\bfm}{\mathbf{m}}
\newcommand{\bfn}{\mathbf{n}}
\newcommand{\bft}{\mathbf{t}}
\newcommand{\bfa}{\mathbf{a}}
\newcommand{\bfb}{\mathbf{b}}
\newcommand{\bfe}{\mathbf{e}}
\newcommand{\sP}{\mathscr{P}}
\newcommand{\sG}{\mathscr{G}}
\newcommand{\N}{\mathbb{N}}
\newcommand{\R}{\mathbb{R}}
\newcommand{\PP}{\mathbb{P}}
\newcommand{\Z}{\mathbb{Z}}
\newcommand{\Q}{\mathbb{Q}}
\newcommand{\ds}{\displaystyle}
\DeclareMathOperator{\rk}{rk}
\DeclareMathOperator{\St}{St}
\DeclareMathOperator{\conv}{conv}
\DeclareMathOperator{\supp}{supp}
\DeclareMathOperator{\cave}{cave}
\DeclareMathOperator{\Stal}{Stal}
\DeclareMathOperator{\Mob}{\textnormal{M\"ob}}
\def\equationautorefname~#1\null{(#1)\null}
\def\sectionautorefname~#1\null{Section #1\null}
\def\subsectionautorefname~#1\null{\S #1\null}
\begin{document}
\title{Three Combinatorial Algorithms for the Cave Polynomial of a Polymatroid}
\author{Anna Shapiro}
    \address{Department of Mathematics, North Carolina State University, Raleigh, NC 27695, USA}
	\email{arshapi4@ncsu.edu}

\subjclass[2020]{Primary: 52B40, 14C17. Secondary: 13H15}

\keywords{Polymatroids, M\"{o}bius function, cave polynomial, Snapper polynomial}

	\begin{abstract}
The cave polynomial of a polymatroid was recently introduced and used to study the syzygies of polymatroidal ideals. We study the combinatorial relationships between three formulas for the cave polynomial. As an application, we interpret the Snapper polynomial in terms of these three formulas.
	\end{abstract}

    \thanks{Anna Shapiro received support from NSF Grants DMS-2452179 and  DMS-2502321, as well as Simons Foundation Travel Support for Mathematicians Awards MPS-TSM-00007970 and MPS-TSM-00013551.}

\maketitle

\section{Introduction}
Polymatroids are a generalization of matroids, and objects of interest in combinatorial optimization.  For more about polymatroids, we point to \cite{Schrijver}. The cave polynomial of a polymatroid was recently introduced in \cite{CRMS} and used to settle two conjectures regarding polymatroids. The first was a conjecture of Bandari, Bayati, and Herzog from 2018, stating that homological shift ideals of polymatroidal ideals are again polymatroidal ideals. The second was a conjecture of Castillo, Cid-Ruiz, Mohammadi, and Monta\~no, stating that the M\"obius support of a polymatroid is a generalized polymatroid. The cave polynomial, which is relatively simple to compute given a polymatroid, connects and resolves both of these conjectures. 
\medskip

We study three known methods for computing the cave polynomial. While it is known that these three methods give the same result, prior proofs of the equivalences rely on deep machinery from algebra and algebraic geometry (see \autoref{sec:realizable} for a recollection of the realizable case and \autoref{sec:arbitrary} for the arbitrary case). The main results of this paper (\autoref{thm:equality}, \autoref{thm:mob value}, and \autoref{thm:stal mob}) give combinatorial relationships between the three methods. 
\medskip

We use $\bfe_i$ to denote the $i$th standard basis vector in $\N^p$. For any point $\bfn = (n_1,\ldots,n_p) \in \N^p$, we set $|\bfn| \coloneqq n_1+\cdots + n_p$, and $\bft^\bfn \coloneqq t_1^{n_1}\cdots t_p^{n_p}$. The base and independence polytopes of a polymatroid $\sP$ are denoted by $B(\sP)$ and $I(\sP)$, respectively. We use $\rk(\sP)$ to mean the rank of the polymatroid. A definition of polymatroids is given in \autoref{def:Mconvex}, and related concepts are defined in \autoref{sec:caves}.  

\begin{definition} 
Let $\sP$ be a polymatroid on $[p]\coloneqq \{1,2, \ldots, p\}$. 
\begin{enumerate}
\item The \textit{indicator function} of $\sP$ is the function $\mathbb{1}_{\sP}\colon \R^p \to \N$ given by $$\mathbb{1}_{\sP}(\bfn) = \begin{cases}
    1 & \text{ if }\bfn\in B(\sP),\\
    0 & \text{ if }\bfn \notin B(\sP).
\end{cases}$$
\item The \textit{cave polynomial} of $\sP$ is 
$$\cave_{\sP}(\bft) = \sum_{\substack{\bfn \in \N^p \\ |\bfn| = \rk(\sP)}}\mathbb{1}_{\sP}(\bfn)\prod_{i=1}^{p-1}\left(1-\max_{i<j}\{\mathbbm{1}_{\sP}(\bfn-\bfe_i+\bfe_j)\}t_i^{-1}\right)\bft^\bfn.$$
\end{enumerate}
\end{definition}

This definition of the cave polynomial is based on the combinatorial notion of caves, introduced in \cite[Section 5]{CCRMM}. There are three other polynomials of interest from the literature. We  compute an example of each of these polynomials in \autoref{ex:stal}, \autoref{ex:mob}, and \autoref{ex:box}. The first comes from the combinatorial ideas of caves and stalactites, introduced in \cite{CCRMM} (see \autoref{sec:caves} for more detail). 
\begin{definition}\label{def:Stal} Let $c_{\bfn}(\sP)$ denote the number of stalactites of $\sP$ containing $\bfn$. We define the \textit{stalactite polynomial} of $\sP$ to be  
\[\rm{Stal}_{\sP}(\bft) = \sum_{\bfn \in I(\sP)}(-1)^{\rk(\sP)-|\bfn|}c_\bfn(\sP)\bft^\bfn.\]
\end{definition}


The second comes from the Snapper polynomial of a matroid whose restriction polymatroid is $\sP$ (see \autoref{rk:Snapp}). 
The formula for the Snapper polynomial of a matroid was given in \cite[Corollary 7.5]{LLPP}. 
This in fact depends only on $\sP$, not on the matroid (see \cite[Corollary 2.9]{EL23}). 

\begin{definition}\label{def:box}
    The \textit{box polynomial} of $\sP$ is given by 
    \[\textrm{Box}_{\sP}(\bft) = \sum_{\bfn \in I(\sP)}\prod_{i=1}^p (t_i^{n_i}-\min\{1, n_i\}t_i^{n_i-1}).\]
\end{definition}
The coefficient $\min(1, n_i)$ ensures that, when $n_i=0$, we get a factor of 1 rather than 0. We note that when $n_i \ge 1$, the factor $t_i^{n_i} - \min(1, n_i)t_i^{n_i-1}$  is simply $t_i^{n_i} - t_i^{n_i-1} = t_i^{n_i-1}(t_i-1)$, which is the discrete analogue of the derivative $\frac{d}{dt_i}t_i^{n_i} = n_it_i^{n_i-1}$. The full product $\prod_i (t_i^{n_i}-\min\{1,n_i\}t_i^{n_i-1})$ is thus a mixed discrete derivative of $\mathbf{t}^{\mathbf{n}}$, one in each variable, and the box polynomial is the sum of these corner contributions over the independence polytope.

 
The third polynomial describes the class of a multiplicity-free variety $X$ in the Grothendieck ring $K(\PP)$ (where $\PP$ is a product of projective spaces, see \autoref{sec:realizable}). The coefficients describing this class as a linear combination of Schubert classes are particular values of the M\"obius function of a poset associated to the polymatroid $\sP$ (see \autoref{mobius section} for the precise definition). We use $\mu_P(-,-)$ to denote the M\"obius function of the poset, and write $\mu_\sP(\bfn) \coloneqq -\mu_P(\bfn, \hat{1})$. The following polynomial is similar to the characteristic polynomial of a poset (\cite[Section 3.10]{EC1}, although here each monomial corresponds to a single element, rather than all elements of a particular rank.

\begin{definition}
    The \textit{M\"obius polynomial} of $\sP$ is 
    \[\textrm{M\"ob}_{\sP}(\bft) = \sum_{\bfn\in I(\sP)} \mu_\sP(\bfn)\bft^\bfn.\]
\end{definition}


We can now state the main result of the paper. 
\begin{theorem}\label{thm:equality}
    The three polynomials above are all equal to the cave polynomial. That is, for any polymatroid $\sP$, we have 
    \[\cave_{\sP}(\bft) = \Stal_{\sP}(\bft) = \mathrm{Box}_{\sP}(\bft) = \Mob_{\sP}(\bft).\]
\end{theorem}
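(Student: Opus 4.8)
The plan is to expand each of the four polynomials into a sum of monomials and to prove the three equalities coefficient-by-coefficient, indexing coefficients by the exponent vector $\bfm$. Two of the polynomials, $\cave_\sP$ and $\mathrm{Box}_\sP$, are presented in product form and expand by inclusion--exclusion, whereas $\Stal_\sP$ and $\Mob_\sP$ are already given as coefficient sums over $I(\sP)$. I would therefore first establish the two ``direct'' identities $\cave_\sP = \Stal_\sP$ and $\mathrm{Box}_\sP = \Mob_\sP$, and then bridge the two families using the companion result \autoref{thm:stal mob}. Throughout, write $\bfe_S \coloneqq \sum_{i\in S}\bfe_i$ for $S \subseteq [p]$.

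For $\cave_\sP = \Stal_\sP$, fix a base point $\bfn \in B(\sP)$ and expand $\prod_{i=1}^{p-1}(1 - \max_{i<j}\{\mathbb{1}_{\sP}(\bfn - \bfe_i + \bfe_j)\}t_i^{-1})$. Each factor contributes either $1$ or $-t_i^{-1}$, the latter only for indices $i$ in the set $A(\bfn) = \{\, i \in [p-1] : \bfn - \bfe_i + \bfe_j \in B(\sP)\text{ for some } j > i\,\}$. Choosing the ``$-t_i^{-1}$'' option on a subset $S \subseteq A(\bfn)$ yields $(-1)^{|S|}\bft^{\bfn - \bfe_S}$, so that
\[
\cave_\sP(\bft) = \sum_{\bfn \in B(\sP)} \ \sum_{S \subseteq A(\bfn)} (-1)^{|S|}\,\bft^{\bfn - \bfe_S}.
\]
Because $|\bfn - \bfe_S| = \rk(\sP) - |S|$, every term producing a fixed monomial $\bft^\bfm$ carries the uniform sign $(-1)^{\rk(\sP)-|\bfm|}$, and the coefficient of $\bft^\bfm$ equals $(-1)^{\rk(\sP)-|\bfm|}$ times the number of pairs $(\bfn, S)$ with $\bfn = \bfm + \bfe_S \in B(\sP)$ and $S \subseteq A(\bfn)$. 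The task is then to match these pairs with the stalactites of $\sP$ containing $\bfm$: the exchange data recorded by $S \subseteq A(\bfn)$ should be precisely the downward-path data defining a stalactite in \autoref{sec:caves}, giving exactly $c_\bfm(\sP)$ such pairs. (One checks along the way that each resulting $\bfm$ lies in $I(\sP)$, since $\bfm \le \bfn$ coordinatewise.)

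For $\mathrm{Box}_\sP = \Mob_\sP$, the factor $t_i^{n_i} - t_i^{\max\{0,n_i-1\}}$ vanishes when $n_i = 0$, so only $\bfn \in I(\sP)$ with all coordinates positive contribute; for these, expanding $\prod_{i=1}^p(t_i^{n_i} - t_i^{n_i-1})$ gives $\sum_{T \subseteq [p]}(-1)^{|T|}\bft^{\bfn - \bfe_T}$. Collecting by $\bfm = \bfn - \bfe_T$ and writing $Z(\bfm) = \{i : m_i = 0\}$, the coefficient of $\bft^\bfm$ becomes the alternating sum $\sum_{T \supseteq Z(\bfm)}(-1)^{|T|}\,[\,\bfm + \bfe_T \in I(\sP)\,]$, an inclusion--exclusion over unit upward coordinate shifts. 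I would recognize this as the Möbius value $\mu_\sP(\bfm)$ of the poset $P$ whose covers are exactly such unit steps, via Philip Hall's (or the cross-cut) formula for $\mu_P(\bfm,\hat1)$; this identification is the content of \autoref{thm:mob value} and yields $\mathrm{Box}_\sP = \Mob_\sP$.

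Chaining $\cave_\sP = \Stal_\sP$ and $\mathrm{Box}_\sP = \Mob_\sP$ then reduces the theorem to the single bridge $\Stal_\sP = \Mob_\sP$, which is \autoref{thm:stal mob}; I would prove it by a sign-reversing involution (or an explicit bijection) pairing the stalactites through $\bfm$ with the chains that survive in the Möbius computation of $\mu_\sP(\bfm)$. I expect this Möbius step to be the main obstacle. The difficulty is not the bookkeeping of signs in the expansions of $\cave_\sP$ and $\mathrm{Box}_\sP$, but controlling the cancellation in the alternating sums and proving that the nonnegative stalactite count $c_\bfm(\sP)$ equals, up to the global sign $(-1)^{\rk(\sP)-|\bfm|}$, the Möbius value. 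Carrying this out combinatorially --- presumably through a careful analysis of the exchange structure, such as an EL-labeling of $P$ or an involution pairing non-extremal chains --- is exactly what replaces the algebro-geometric input, and is the heart of the argument.
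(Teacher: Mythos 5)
Your plan has the same architecture as the paper's proof: establish $\cave_\sP=\Stal_\sP$ by expanding the cave product and matching terms with stalactite memberships (this is \autoref{thm:cave=stal}), establish $\mathrm{Box}_\sP=\Mob_\sP$ from the explicit M\"obius formula of \autoref{thm:mob value} (this is \autoref{thm:box=mob}), and close the chain with $\Stal_\sP=\Mob_\sP$, i.e.\ \autoref{thm:stal mob}, which the paper proves as \autoref{thm:stal=mob}; citing that companion theorem is legitimate, since the paper assembles \autoref{thm:equality} from exactly these three pieces. In the cave--stalactite step, the ``matching of exchange data'' you defer is a one-line observation you should make explicit: in the standard lex order, $\bfa-\bfe_i+\bfe_j\prec\bfa$ if and only if $i<j$, so your set $A(\bfn)$ is precisely the set of directions in which $\bfn$ has lex-smaller neighbors, i.e.\ the direction set of the stalactite at $\bfn$; the pairs $(\bfn,S)$ then biject with stalactite memberships of $\bfm$, giving $c_\bfm(\sP)$. (Also, for the bridge: the paper's proof of \autoref{thm:stal mob} is not a sign-reversing involution on chains but a verification that $c_\bfn(\sP)$ satisfies the recurrence \autoref{eqn} defining $\mu_\sP$, using the truncation lemmas \autoref{lem:lem1} and \autoref{lem:lem2} to reduce to the claim that the signed stalactite counts of any polymatroid sum to $1$; that holds because the lex-least base contributes $1$, while by M-convexity every other base point has a nonempty set $J$ of lex-smaller neighbor directions and so contributes $\sum_{J'\subseteq J}(-1)^{|J'|}=0$.)

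The genuine gap is in your $\mathrm{Box}_\sP=\Mob_\sP$ step: the treatment of zero coordinates is wrong, and the identification you assert under your reading is false. You read \autoref{def:box} literally, observe that the $i$-th factor vanishes when $n_i=0$, and discard every $\bfn\in I(\sP)$ with a zero coordinate. For the running example $\sP=\{(0,3),(1,2),(2,1)\}$ this leaves only $(1,1),(1,2),(2,1)$, and the resulting sum is $t_1t_2^2+t_1^2t_2-t_1t_2-t_2^2-t_1^2+1$, which is not $\Mob_\sP(\bft)=t_2^3+t_1t_2^2+t_1^2t_2-t_2^2-t_1t_2$. Concretely, your coefficient formula $\sum_{T\supseteq Z(\bfm)}(-1)^{|T|}\,[\bfm+\bfe_T\in I(\sP)]$ evaluates to $0$ at $\bfm=(0,3)$, whereas $\mu_\sP((0,3))=1$ because $(0,3)\in B(\sP)$; no appeal to Hall's or cross-cut theorems can rescue this, because the two quantities are simply unequal. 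What has happened is that the printed formula in \autoref{def:box} has a typo: as \autoref{ex:box} shows (each factor with $n_i=0$ is written there as $1$, not $0$), the intended $i$-th factor is $t_i^{n_i}-t_i^{n_i-1}$ for $n_i\ge1$ and $1$ for $n_i=0$. With that convention every $\bfn\in I(\sP)$ contributes $\sum_{J\subseteq\supp(\bfn)}(-1)^{|J|}\bft^{\bfn-\bfe_J}$, whose coefficient of $\bft^\bfm$ is exactly $\mu_P(\bfm,\bfn)$ by \autoref{thm:mob value}, and summing over $\bfn\in I(\sP)$ gives $\sum_{\bfm\le\bfn<\hat{1}}\mu_P(\bfm,\bfn)=-\mu_P(\bfm,\hat{1})=\mu_\sP(\bfm)$ directly from the defining recurrence of $\mu_P$; this is the paper's argument in \autoref{thm:box=mob}. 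Your restriction to strictly positive $\bfn$, and the consequent condition $T\supseteq Z(\bfm)$, must be deleted. Testing your expansion against the running example would have exposed the problem immediately: under your reading the $t_2^3$ term of $\mathrm{Box}_\sP$ disappears entirely.
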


A certain class of polymatroids, called realizable polymatroids, are related to multiplicity-free varieties (in the sense of \cite{Brion}) in the following way: for any realizable polymatroid $\sP$, one can construct a multiplicity-free variety $X_\sP \subseteq \PP \coloneqq \PP^{m_1} \times \cdots \times \PP^{m_p} $ whose multidegree support is $\sP$ (see \cite[proof of Proposition 7.15]{CCRMM} for the construction). This construction is essential to prior understanding of the cave polynomial in the realizable case. We will give a brief summary of the proof of this result in the realizable case, and then in the arbitrary case. These proofs rely heavily on algebraic geometry. 

\subsection{The realizable case}\label{sec:realizable}

Let $\sP$ be a polymatroid and $X_\sP$ a multiplicity-free variety with $\sP$ as its multidegree support. Then the coefficients of the three polynomials defined above appear in two places:  as the coefficients of the multigraded Hilbert polynomial of $X_\sP$ written in terms of the binomial expressions $\binom{t_1+a_1}{a_1}\cdots\binom{t_p+a_p}{a_p}$, and  as the coefficients of the expression of $[X_\sP]$ as a linear combination of the classes of $\mathcal{O}_{\PP^{a_1} \times \cdots \times \PP^{a_p}}$  in the Grothendieck ring $K(\PP)$. In \cite[Theorem 6.9]{CCRMM}, it was shown that the coefficients of the stalactite polynomial of $\sP$ are also the Hilbert coefficients of the variety $X_\sP$. The proof uses a shelling order of the simplicial complex $\Delta(J)$ associated to the generic initial ideal $J = \mathrm{gin}(\mathfrak{P})$ of the $\N^p$-graded prime ideal $\mathfrak{P}$ associated to $X_\sP$. Conca and Tsakiris proved in \cite[Theorem 3.9]{ConcaTsakiris} that the multigraded Hilbert function is given by 
\[\sum_{\mathbf{n} \in I(\sP)}\prod_{i=1}^p\binom{t_i+n_i-1}{n_i}.\]

 Let $\mathfrak{b}\colon \Q[t_1,\ldots,t_p] \to \Q[t_1,\ldots,t_p]$ denote the polynomial map sending monomials to their corresponding products of binomial expressions, that is, $$\mathfrak{b}\colon t_1^{n_1}\cdots t_p^{n_p} \mapsto \binom{t_1+n_1}{n_1}\cdots\binom{t_p+n_p}{n_p}.$$

Then we have \[\sum_{\mathbf{n} \in I(\sP)}\prod_{i=1}^p\binom{t_i+n_i-1}{n_i}  = \mathfrak{b}(Box_{\sP}(\bft)),\]

so the coefficients of the multigraded Hilbert function are the same as those of the box polynomial.
 It was shown by Knutson in \cite[Theorem 3]{Knutson} that the coefficients describing the class of a multiplicity-free variety $X$ in the Grothendieck ring as a linear combination of Schubert classes are given by the M\"obius function of an order ideal in the Bruhat order. In \cite[Theorem 6.12(iii)]{CCRMM}, it was also proven that the Hilbert coefficients satisfy a M\"obius-like recurrence, i.e. that the stalactite polynomial is equal to the M\"obius polynomial. This proof uses properties of multiplicity-free varieties, namely their behavior under cuts by generic hyperplanes and projections.

\subsection{The arbitrary case}\label{sec:arbitrary}

Brion showed that any multiplicity-free variety $X_\sP$ has a flat degeneration to a variety $Y_\sP$ \cite{Brion}. As in \cite[Definition 2.7]{CRMS}, for any polymatroid $\sP$, we can define its dual polymatroidal ideal $J_\sP$, and then $Y_\sP = V(J_\sP)$. The generic initial ideal $J$ used in the proof that the stalactite coefficients are equal to the Hilbert coefficients coincides with $J_\sP$, so this is true in the arbitrary case as well. It is known that the cave polynomial of a polymatroid is related to the Snapper polynomial of a polymatroid (see \autoref{rk:Snapp}). Eur and Larson gave the following formula for the Snapper polynomial in \cite[Proposition 2.8]{EL23}:
\[\mathrm{Snapp}_\sP(t_1,\ldots,t_p) = \sum_{\bfn \in I(\sP)} \prod_{i=1}^p \binom{t_i+n_i-1}{n_i}.\]
This shows that the box polynomial is equal to the cave polynomial in the arbitrary case. The flat degeneration of $X_\sP$ to $Y_\sP$ also means that we can use $Y_\sP$ to compute the Hilbert polynomial, so the fact that the Hilbert coefficients are given by a M\"obius function is still true in the arbitrary case. 

\subsection{Combinatorial Approach} The primary goal of this paper is to further combinatorialize the proof of \autoref{thm:equality}. In particular, we give an explicit formula for the M\"obius function and show that it computes the coefficients of the formula given in ~\autoref{def:box}. 

\begin{theorem}\label{thm:mob value} Let $\bfm \le \bfn < \hat{1}$ in $P$. Then  \[\mu_P(\bfm,\bfn) = \begin{cases}
    (-1)^j & \text{if } \bfm = \bfn-\bfe_{i_1}-\cdots - \bfe_{i_j} \text{ for some distinct } i_1, \ldots, i_j \in [p],\\
    0 & \text{otherwise.}
\end{cases}\]
\end{theorem}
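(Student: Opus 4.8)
The plan is to reduce the computation to the Möbius function of a product of chains, which has a standard form. First I would recall from the Möbius section the definition of the poset $P$: below the adjoined top element $\hat{1}$, its elements are the lattice points of the independence polytope $I(\sP)$, ordered by the componentwise partial order $\le$ on $\N^p$ (this is the incarnation of Bruhat order appropriate to the product of projective spaces, where each factor contributes a total order). Since the hypothesis imposes $\bfn < \hat{1}$, the closed interval $[\bfm,\bfn]_P$ lies entirely in this lattice-point part and never meets $\hat{1}$, so I may work purely inside $\N^p$.

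The crucial structural input is that $I(\sP)$ is an order ideal (a down-closed set) for the componentwise order: if $\mathbf{k} \le \bfn$ and $\bfn \in I(\sP)$, then $\mathbf{k} \in I(\sP)$. This is a standard property of independence polytopes of polymatroids. Consequently, for $\bfm \le \bfn$ both in $I(\sP)$, every point $\mathbf{k}$ with $\bfm \le \mathbf{k} \le \bfn$ automatically lies in $I(\sP)$ (as $\mathbf{k} \le \bfn$), so the interval $[\bfm,\bfn]_P$ coincides with the full interval in $\N^p$, namely the product of chains $\prod_{i=1}^p [m_i,n_i]$.

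Next I would invoke the product formula for Möbius functions: the Möbius function of a product poset is the product of the factor Möbius functions, so $\mu_P(\bfm,\bfn) = \prod_{i=1}^p \mu_{[m_i,n_i]}(m_i,n_i)$. Each factor is the Möbius function of a chain, equal to $1$ if $n_i = m_i$, to $-1$ if $n_i = m_i+1$, and to $0$ if $n_i \ge m_i+2$. Hence the product vanishes unless every coordinate of $\bfn-\bfm$ lies in $\{0,1\}$ — that is, unless $\bfn-\bfm = \bfe_{i_1}+\cdots+\bfe_{i_j}$ for distinct indices $i_1,\ldots,i_j$ — in which case it equals $(-1)^j$. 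This is exactly the claimed formula.

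I do not expect a serious obstacle here; the only point requiring real care is confirming that the interval below $\hat{1}$ really is the full product of chains, which is precisely where the down-closedness of $I(\sP)$ enters and why the hypothesis $\bfn < \hat{1}$ is essential (the genuinely delicate Möbius values, those into $\hat{1}$ that feed the Möbius polynomial via $\mu_\sP(\bfn) = -\mu_P(\bfn,\hat{1})$, are deliberately excluded from this statement). I would therefore be careful to verify that the order relation recalled for $P$ agrees with componentwise order on the lattice points of $I(\sP)$, since the entire argument rests on that identification.
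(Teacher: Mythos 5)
Your proof is correct, but it takes a genuinely different route from the paper's. The paper proves \autoref{thm:mobius intervals} by induction on the length $\ell$ of the interval $[\bfm,\bfn]$, splitting into the case $\bfm = \bfn-\bfe_{i_1}-\cdots-\bfe_{i_\ell}$ (where the interval is a Boolean lattice) and the complementary case, and in each case evaluating the defining recurrence via the vanishing of alternating sums of binomial coefficients. You instead observe that the lattice points of $I(\sP)$ form a down-closed subset of $\N^p$, so any interval $[\bfm,\bfn]$ with $\bfn < \hat{1}$ is the full componentwise interval $\prod_{i=1}^p [m_i,n_i]$, a product of chains; the product theorem for M\"obius functions (e.g.\ Stanley, \emph{Enumerative Combinatorics} I, Prop.\ 3.8.2) together with the M\"obius function of a chain then gives the formula immediately. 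The structural input is actually the same in both arguments: the paper uses down-closedness implicitly when it asserts that every element of $[\bfm,\bfn]$ has the form $\bfm+\bfe_J$, while you isolate it explicitly and then outsource the computation to a standard theorem. What your route buys is brevity and conceptual clarity --- it shows the statement has nothing to do with polymatroids beyond the fact that $I(\sP)\cap\N^p$ is an order ideal of $\N^p$, so the delicate M\"obius values are exactly those into $\hat{1}$. What the paper's route buys is self-containedness --- it needs only the defining recurrence and a binomial identity, and its recurrence-driven style matches the later arguments in the paper (compare the proof of \autoref{thm:stal=mob}).
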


We also show that the number of stalactites containing a point is given by a M\"obius-like recurrence. 

\begin{theorem} \label{thm:stal mob}
    For each $\bfn \in \N^p$, the signed number of stalactites $c_\bfn'(\sP) \coloneqq (-1)^{\rk(\sP) - |\bfn|}c_\bfn(\sP)$ obeys the following recurrence: 
    $$c'_\bfn(\sP) = \begin{cases}
    1 & \text{if } \bfn \in B(\sP),\\
1-\ds\sum_{\bfm >\bfn} c'_\bfm(\sP) &\text{if } \bfn \in I(\sP) \setminus B(\sP),\\
0 & \text{if }\bfn \notin I(\sP). 
    \end{cases}
    $$

\noindent    That is, $c'_\bfn(\sP) = \mu_\sP(\bfn).$
\end{theorem}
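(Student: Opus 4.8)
The plan is to prove the stated recurrence for the stalactite count $c_\bfn(\sP)$ and, in parallel, to verify that $\mu_\sP$ satisfies the very same recurrence. Since the recurrence expresses the value at $\bfn$ in terms of the values at points strictly above $\bfn$, it determines its solution uniquely by downward induction on the corank $\rk(\sP)-|\bfn|$, and the identity $c_\bfn(\sP)=\mu_\sP(\bfn)$ then follows formally. It is also worth recording at the outset that, for $\bfn\in I(\sP)$, the three-case recurrence is equivalent to the single ``up-sum'' identity $\sum_{\bfm\ge\bfn}c_\bfm(\sP)=1$, which is the cleanest form to aim at combinatorially.

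First I would dispatch the $\mu_\sP$ side, which is essentially formal. For $\bfn\notin I(\sP)$ the point is not an element of $P$, so $\mu_\sP(\bfn)=0$ by convention. For $\bfn\in I(\sP)$ I would apply the defining relation of the M\"obius function to the interval $[\bfn,\hat{1}]$, namely $\sum_{\bfn\le\bfz\le\hat{1}}\mu_P(\bfz,\hat{1})=0$ (valid since $\bfn\neq\hat{1}$), and rearrange it as
\[\mu_\sP(\bfn)=-\mu_P(\bfn,\hat{1})=\mu_P(\hat{1},\hat{1})+\sum_{\bfn<\bfz<\hat{1}}\mu_P(\bfz,\hat{1})=1-\sum_{\bfn<\bfz<\hat{1}}\mu_\sP(\bfz).\]
By the description of the order on $P$ below $\hat{1}$ recorded in \autoref{thm:mob value}, the elements $\bfz$ with $\bfn<\bfz<\hat{1}$ are exactly the points $\bfm\in I(\sP)$ with $\bfm>\bfn$; and when $\bfn\in B(\sP)$ there are none, since the bases are the maximal elements of $I(\sP)$. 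This yields $\mu_\sP(\bfn)=1$ for $\bfn\in B(\sP)$ and $\mu_\sP(\bfn)=1-\sum_{\bfm>\bfn}\mu_\sP(\bfm)$ for $\bfn\in I(\sP)\setminus B(\sP)$, which is precisely the recurrence.

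The substantive part is to show that the stalactite count obeys the same three cases. The boundary cases should be immediate from the definition of a stalactite from \cite{CCRMM}: no stalactite meets a point outside $I(\sP)$, so $c_\bfn(\sP)=0$ there, and each base lies in exactly one stalactite, giving $c_\bfb(\sP)=1$ for $\bfb\in B(\sP)$. The heart of the argument is the case $\bfn\in I(\sP)\setminus B(\sP)$, where I would prove the up-sum identity $\sum_{\bfm\ge\bfn}c_\bfm(\sP)=1$. Concretely, I would analyze the stalactites meeting $\bfn$ and relate them, via the way stalactites are generated from the bases lying above $\bfn$, to the stalactites meeting the points that cover $\bfn$, exhibiting a telescoping in which the total count over all $\bfm\ge\bfn$ collapses to the single contribution coming from the ceiling $B(\sP)$. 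I expect this bookkeeping --- accounting for each stalactite exactly once as one passes from $\bfn$ to the points above it --- to be the main obstacle, and the step where the precise definition of stalactites is indispensable.

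Finally, I would combine the two halves. Both $c_\bullet(\sP)$ and $\mu_\sP$ vanish off $I(\sP)$, equal $1$ on $B(\sP)$, and satisfy $f(\bfn)=1-\sum_{\bfm>\bfn}f(\bfm)$ on $I(\sP)\setminus B(\sP)$. Reading this relation top-down and inducting on the corank $\rk(\sP)-|\bfn|$, with base case the elements of $B(\sP)$ where both functions equal $1$, the two functions agree at every $\bfn$, giving $c_\bfn(\sP)=\mu_\sP(\bfn)$ as claimed.
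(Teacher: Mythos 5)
Your reduction is the right one --- the recurrence for $\mu_\sP$ is formal, the boundary cases are easy, and everything hinges on the up-sum identity for points of $I(\sP)\setminus B(\sP)$ --- but that identity is precisely where your proposal stops: you describe a hoped-for ``telescoping'' and explicitly defer the bookkeeping, so the core of the theorem is never proved. Two warnings about that core. First, the signs matter: the quantity that satisfies the recurrence is the \emph{signed} count $c'_\bfm(\sP)=(-1)^{\rk(\sP)-|\bfm|}c_\bfm(\sP)$, i.e.\ the coefficient of $\bft^\bfm$ in $\Stal_\sP(\bft)$. With unsigned counts the identity $\sum_{\bfm\ge\bfn}c_\bfm(\sP)=1$ is simply false --- for $\sP=\{(0,3),(1,2),(2,1)\}$ and $\bfn=(0,2)$ the left side is $3$ --- so a cover-by-cover telescoping of nonnegative terms cannot work; the identity holds only because of cancellation between levels of opposite parity, and your sketch never engages with this.

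Second, the paper's proof (\autoref{thm:stal=mob}) shows what the actual bookkeeping requires, and it is not a telescoping at all. It first proves two lemmas: the truncation $\sP_\bfn\coloneqq\sP\cap(\bfn+\N^p)$ is again a polymatroid (\autoref{lem:lem1}), and signed stalactite counts at points above $\bfn$ agree in $\sP$ and $\sP_\bfn$ (\autoref{lem:lem2}). These reduce the local identity $\sum_{\bfm\ge\bfn}c'_\bfm(\sP)=1$ to the global identity $\sum_{\bfm\in I(\sP)}c'_\bfm(\sP)=1$, valid for an arbitrary polymatroid. That global identity is then proved by regrouping the sum by stalactite rather than by point: each stalactite is a signed box $\{\bfa_i-\bfe_{J'} \mid J'\subseteq J_i\}$ contributing $\sum_{J'\subseteq J_i}(-1)^{|J'|}$, which vanishes unless $J_i=\varnothing$; and M-convexity together with the lex order shows that every basis element other than the lex-least one has at least one lex-smaller neighbor, so exactly one stalactite (a singleton) contributes $1$. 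Both ingredients --- closure of polymatroids under truncation and the M-convexity argument forcing $J_i\neq\varnothing$ --- are absent from your proposal, and attempting to regroup your up-sum by stalactite without truncating first leaves you with boxes cut off by the condition $\bfm\ge\bfn$, whose analysis essentially amounts to reproving those two lemmas.
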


One application of these results is to the Snapper polynomial. 

\begin{remark}\label{rk:Snapp}
The augmented $K$-ring $K(M)$ of a matroid was introduced by Larson, Li, Payne, and Proudfoot \cite{LLPP}. Let $M$ be a matroid on ground set $E$ with subsets $S_1, \ldots, S_p$ such that the restriction polymatroid is $\sP$ (with cage $\bfm$). That is, 
\begin{enumerate}
\item the subsets $S_1, \ldots, S_p$ form a partition of $E$,
\item $|S_i| = m_i$ for all $i \in [p]$,
\item The rank function of the matroid, $\rk_M:2^E \to \N$ is preserved by the product of symmetric groups $\mathfrak{S}_{S_1}\times \cdots \times \mathfrak{S}_{S_p}$, and 
\item For each $J \subseteq [p]$, we have $$\rk_\sP(J) = \rk_M\left(\bigcup_{j \in J}S_j\right).$$
\end{enumerate}

Then, following the construction in \cite{LLPP}, the Snapper polynomial of $\sP$ is 
\[\mathrm{Snapp}_\sP(t_1,\ldots,t_p) \coloneqq \chi\left(M, \mathcal{L}_{S_1}^{\otimes t_1} \otimes \cdots \otimes \mathcal{L}_{S_p}^{\otimes t_p}\right),\]
where $\chi(M,-)\colon K(M) \to \Z$ denotes the Euler characteristic map and $\mathcal{L}_{S_i}$ denotes the line bundle corresponding to the subset $S_i$. 

 It was also shown in \cite{CRMS} that $\mathrm{Snapp}_{\sP}(t_1,\ldots,t_p) = \mathfrak{b}(\cave_\sP(t_1,\ldots,t_p))$, where $\mathfrak{b}$ is, as above, the polynomial map sending monomials to their corresponding products of binomial expressions. Recall that Eur and Larson showed that $\mathrm{Snapp}_\sP(t_1,\ldots,t_p) = \mathfrak{b}(\mathrm{Box}_\sP(t_1,\ldots,t_p))$\cite[Proposition 2.8]{EL23}.
 \end{remark}
 
 From \autoref{thm:equality}, we get the following equalities. 

\begin{corollary} Let $\mathfrak{b}$ be the map defined above. We have

\[\mathrm{Snapp}_{\sP}(t_1,\ldots,t_p) 
    = \mathfrak{b}(\textnormal{M\"ob}_\sP(t_1,\ldots,t_p)) 
    = \mathfrak{b}(\Stal_\sP(t_1,\ldots,t_p)).\]
\end{corollary}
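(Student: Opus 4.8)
The plan is to combine the main equality theorem with the two expressions for the Snapper polynomial already recorded in the preceding remark; no new combinatorics is needed. The key observation is that $\mathfrak{b}$ is defined on monomials and extended $\Q$-linearly to all of $\Q[t_1,\ldots,t_p]$, so it is a well-defined function on polynomials. In particular, if two polynomials are equal as elements of $\Q[t_1,\ldots,t_p]$, then their images under $\mathfrak{b}$ coincide. I would state this well-definedness explicitly at the outset, since it is the only genuine content of the argument.

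Next I would invoke \autoref{thm:equality}, which gives the chain of equalities $\cave_\sP = \Stal_\sP = \mathrm{Box}_\sP = \Mob_\sP$ in $\Q[t_1,\ldots,t_p]$. Applying $\mathfrak{b}$ to this chain and using the well-definedness noted above yields
\[\mathfrak{b}(\cave_\sP) = \mathfrak{b}(\Stal_\sP) = \mathfrak{b}(\mathrm{Box}_\sP) = \mathfrak{b}(\Mob_\sP).\]

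Finally I would recall from \autoref{rk:Snapp} that $\mathrm{Snapp}_\sP = \mathfrak{b}(\cave_\sP)$ (the result from \cite{CRMS}), which is equivalently $\mathrm{Snapp}_\sP = \mathfrak{b}(\mathrm{Box}_\sP)$ by the Eur--Larson formula. Combining either of these with the displayed chain of equalities immediately gives $\mathrm{Snapp}_\sP = \mathfrak{b}(\Mob_\sP) = \mathfrak{b}(\Stal_\sP)$, as claimed. There is no real obstacle here: the corollary is a formal consequence of \autoref{thm:equality} together with the linearity of $\mathfrak{b}$. The only point requiring minor care is conceptual rather than technical, namely that $\mathfrak{b}$ must be applied to the stalactite and M\"obius polynomials as abstract elements of $\Q[t_1,\ldots,t_p]$ — where equal polynomials necessarily have identical images — and not to their particular combinatorial presentations, which a priori look different.
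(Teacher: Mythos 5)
Your proposal is correct and matches the paper's own (implicit) argument: the corollary is stated as an immediate consequence of \autoref{thm:equality} combined with the identities $\mathrm{Snapp}_\sP = \mathfrak{b}(\cave_\sP)$ and $\mathrm{Snapp}_\sP = \mathfrak{b}(\mathrm{Box}_\sP)$ recorded in \autoref{rk:Snapp}, exactly as you do. Your explicit remark that $\mathfrak{b}$ is well defined on polynomials (so equal polynomials have equal images) is a reasonable clarification, but it adds nothing beyond what the paper treats as immediate.
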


\section{Preliminaries}\label{sec:caves}
Here we state the relevant definitions for the remainder of the paper. We first establish some notation. For a positive integer $p$ let $[p]$ denote the set $\{1, 2, \ldots, p\}$. We use $\bfe_i$ to denote the $i$th standard basis vector in $\N^p$. Given a subset $J = \{j_1,\ldots,j_k\} \subseteq [p]$ we say $\bfe_J \coloneqq \bfe_{j_1}+\cdots+\bfe_{j_k}$. For any point $\bfn = (n_1,\ldots,n_p) \in \N^p$, we say $|\bfn| \coloneq n_1+\cdots + n_p$.

\begin{definition} A \textit{polymatroid} on $[p]$ with \textit{cage} $\bfm = (m_1,\ldots,m_p)$ is given by a function $\rk_{\sP}\colon 2^{[p]} \to \N$ satisfying the following conditions:
\begin{enumerate}
    \item $\rk_{\sP}(\varnothing) = 0$,
    \item $\rk_{\sP}(\{i\}) \le m_i$,
    \item If $I_1 \subseteq I_2 \subseteq [p]$, then $\rk_{\sP}(I_1) \le \rk_{\sP}(I_2)$, and
    \item For all $I_1, I_2 \subseteq [p]$, $\rk_{\sP}(I_1) + \rk_{\sP}(I_2) \ge \rk_\sP(I_1 \cup I_2) + \rk_{\sP}(I_1 \cap I_2)$. 
\end{enumerate}
Then $\rk_{\sP}$ is called the \textit{rank function of $\sP$,} and the \textit{rank of the polymatroid }is $\rk(\sP)\coloneqq \rk_{\sP}([p])$. 
\end{definition}

Under this definition, a \textit{matroid} is a polymatroid with cage $(1,\ldots,1)$. There is a second definition which will be more convenient for us to use. The two definitions were shown to be equivalent in \cite[Theorem 3.4]{HH2002}.  

\begin{definition}\label{def:Mconvex}
    We say a finite set $A \subseteq \N^p$ is \textit{homogeneous} if, for all $\bfa, \bfb \in A$, we have $|\bfa| = |\bfb|$. A \textit{polymatroid} is a finite homogeneous set of points $\sP \subseteq \N^p$ that is M-convex, i.e., 
    \begin{quote}
        For each $\bfu, \bfv \in \sP$ and $i \in [p]$ such that $u_i > v_i$, there is $j \in [p]$ such that $u_j < v_j$ and $\bfu-\bfe_i+\bfe_j \in \sP$. 
    \end{quote}
\end{definition}
If $\sP$ is actually a \textit{matroid}, then all entries of each $\bfn \in \sP$ are either $0$ or $1$. In fact, M-convexity boils down to the basis exchange property for matroids. 

\begin{example}\label{ex}
    We will use the polymatroid $\sP = \{(0,3), (1,2), (2,1)\}$ as a running example throughout the rest of the paper. 
\end{example}

\begin{remark}
The above definitions are related as follows: given a set of points $\sP$ forming a polymatroid on $[n]$, the corresponding rank function is given by $\rk_\sP(S) = \ds\max_{\bfn \in \sP}\left\{\sum_{i \in S}n_i\right\}$ for each $S \subseteq [n]$. Thus, the rank function corresponding to the polymatroid in \autoref{ex} is $$\rk_\sP(\varnothing) = 0, \quad \rk_\sP(\{1\}) = 2, \quad \rk_\sP(\{2\}) = 3,  \quad \rk_\sP(\{1,2\}) = 3.$$ 
\end{remark}

The cave polynomial is nonhomogeneous, so its support cannot be a polymatroid. Instead we use a generalization of a polymatroid which need not be homogeneous. 

\begin{definition}
    A \textit{generalized polymatroid} is a finite set $\mathscr{G} \subseteq \N^p$ whose homogenization is a polymatroid. Equivalently, following \autoref{def:Mconvex}, a generalized polymatroid is a set $\sG$ with the following two properties:
    \begin{enumerate}
        \item For each $\bfu, \bfv \in \sG$ with $u_i > v_i$, one of the following is true:
        \begin{enumerate}
            \item There exists an index $j \in [p]$ such that $u_j<v_j$ and $\bfu-\bfe_i+\bfe_j, \ \bfv+\bfe_i-\bfe_j \in \mathscr{G}$, or
            \item $|\bfu| > |\bfv|$ and $\bfu-\bfe_i, \ \bfv+\bfe_i \in \mathscr{G}$. 
        \end{enumerate}
        \item For each $\bfu, \bfv \in \mathscr{G}$ with $|\bfu|>|\bfv|$, there is $j \in [p]$ such that $u_j >v_j$ and both $\bfu-\bfe_j$ and $\bfv+\bfe_j$ are in $\mathscr{G}$. 
    \end{enumerate}
\end{definition}

\begin{definition}
    The \textit{independence polytope of $\sP$} is
$$I(\sP) \coloneqq \Big\{\bfn \in \R^p \bigm| \sum_{i\in I} n_i \le \rk(I) \text{ for all }I \subseteq [p]\Big\}.$$

The \textit{base polytope of $\sP$} is 
$$B(\sP) \coloneqq I(\sP) \cap \Big\{\bfn \in \R^p \bigm| \sum_{i=1}^pn_i = \rk(\sP)\Big\}.$$

    Equivalently, following the M-convexity definition, the base polytope is the convex hull of the points in the polymatroid, i.e. $B(\sP) = \conv(\sP)$. The independence polytope is the set of points below $B(\sP)$ in the nonnegative orthant, that is, $I(\sP) = (B(\sP) + \R^p_{\le0})\cap \R_{\ge0}^p$.
\end{definition}

Now we give the definitions of neighbors, stalactites, and caves. The construction of the cave polynomial mimics the construction of a cave as the union of stalactites. These definitions were first stated in \cite[Section 5]{CCRMM}. 

\begin{definition} 
Let $\sP$ be a polymatroid and $\bfu,\bfv \in B(\sP)$. 
\begin{enumerate}
    \item We say that $\bfu$ is a \textit{neighbor} of $\bfv$ in the $(-\ell,j)$ direction if $\bfu=\bfv-\bfe_{\ell}+\bfe_j$ for some $\ell,j \in [p]$. 
    \item Let $\ell_1,\ldots,\ell_m$ be distinct elements of $\supp(\bfu)$, that is, $u_{\ell_i} \ne 0$ for $i=1,\ldots,m$. Define
    $$\St(\bfu;\ell_1,\ldots,\ell_m) = \Big\{\bfu-\sum_{i\in J} \bfe_{\ell_i} \mid J \subseteq [m]\Big\}. $$
    \item Let $V \subseteq \sP$. Consider the set $J$ of indices $\ell$ such that there is $\mathbf{w}\in V$ which is a neighbor of $\bfu$ in the $(-\ell,j)$ direction for some $j \in [p]$. We define $\St(\bfu;V) = \St(\bfu;J)$. 
\end{enumerate}
\end{definition}

\begin{definition} Let $A$ be a finite set in $\N^p$.
Let $\max(A)$ denote $\ds\max_{\bfa \in A}\{|\bfa|\}$. The \textit{top} elements of $A$ are $A^{\rm{top}} \coloneqq \{\bfa \in A \mid |\bfa| = \max(A)\}$. 
For any $\bfb \in \N^p$, the \textit{truncation of $A$ at $\bfb$} is $A_\bfb \coloneqq \{\bfn \in A \mid \bfn\ge\bfb\}$.
\end{definition}

\begin{definition}
    A finite set $C \in \N^p$ is a \textit{cave} if each of the following conditions is satisfied:
    \begin{enumerate}
        \item The top elements $C^{\mathrm{top}}$ form a polymatroid. 
        \item Let $\prec$ denote any lexicographic order on $\N^p$ and order the elements of $C^{\mathrm{top}}$ as $\bfa_1 \prec \cdots \prec \bfa_{|C^{\mathrm{top}}|}$. Then 
        $$C = \bigcup_{i=1}^{|C^{\mathrm{top}}|} \St(\bfa_i;\{\bfa_1,\ldots,\bfa_{i-1}\}).$$
        \item For each nonzero $\bfb \in \N^p$, the truncation $C_\bfb$ is a generalized polymatroid. 
    \end{enumerate}
\end{definition}

 When we compute the stalactites $\St(\bfa_i;\{\bfa_1,\ldots,\bfa_{i-1}\})$ in order to find the coefficients of the stalactite polynomial, we can use any lex ordering $\prec$ to order the elements $\bfa_1 \prec \bfa_2 \prec \cdots \bfa_r$ of the polymatroid. The individual stalactites do depend on the choice of ordering, but the number of stalactites containing each point does not; this is illustrated in \autoref{ex:stal}. In \autoref{thm:stal=mob} we show that the stalactite polynomial is equal to the M\"obius polynomial, which does not depend on a lex ordering.   
 
 \begin{theorem}\label{thm:cave=stal}
     For any polymatroid $\sP$, we have $\cave_{\sP}(\bft) = \Stal_{\sP}(\bft)$. 
 \end{theorem}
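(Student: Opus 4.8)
The plan is to expand both polynomials into a single formal sum indexed by pairs $(\bfn, K)$, where $\bfn$ ranges over the points of $\sP$ (the top elements of the cave, each carrying exactly one stalactite) and $K$ ranges over a specified family of subsets of $\supp(\bfn)$, and then to check that the two expansions agree term by term. First I would fix the standard lexicographic order $\prec$ on $\N^p$ (coordinate $1$ most significant, smaller entries first); since $\Stal_\sP$ is independent of the chosen lex order, this loses no generality. Writing $\sP = \{\bfa_1 \prec \cdots \prec \bfa_r\}$, the stalactite attached to $\bfa_i$ is $\St(\bfa_i; \{\bfa_1, \ldots, \bfa_{i-1}\}) = \St(\bfa_i; J)$, where $J$ is the set of directions $\ell$ in which $\bfa_i$ has a previously listed neighbor. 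The key computation is to identify $J$: for a top point $\bfn$ and indices $\ell \ne j$, the neighbor $\bfn - \bfe_\ell + \bfe_j$ is again a top point, and it precedes $\bfn$ in the standard lex order exactly when $\ell < j$ (the first coordinate in which $\bfn - \bfe_\ell + \bfe_j$ and $\bfn$ differ is $\min\{\ell, j\}$, where the neighbor is the smaller vector precisely when $\ell < j$). Hence
$$J = S(\bfn) \coloneqq \{\ell \in [p] \mid \bfn - \bfe_\ell + \bfe_j \in \sP \text{ for some } j > \ell\},$$
and this set is automatically contained in $[p-1] \cap \supp(\bfn)$.

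Next I would expand the cave polynomial. For fixed $\bfn \in \sP$ the factor $\max_{i<j}\{\mathbb{1}_\sP(\bfn - \bfe_i + \bfe_j)\}$ equals $1$ precisely when $i \in S(\bfn)$ and $0$ otherwise, so
$$\prod_{i=1}^{p-1}\Big(1 - \max_{i<j}\{\mathbb{1}_\sP(\bfn - \bfe_i + \bfe_j)\}\, t_i^{-1}\Big)\, \bft^\bfn = \bft^\bfn \prod_{i \in S(\bfn)} (1 - t_i^{-1}) = \sum_{K \subseteq S(\bfn)} (-1)^{|K|}\, \bft^{\bfn - \bfe_K}.$$
Summing over $\bfn$ gives $\cave_\sP(\bft) = \sum_{\bfn \in \sP} \sum_{K \subseteq S(\bfn)} (-1)^{|K|}\, \bft^{\bfn - \bfe_K}$.

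Finally I would expand the stalactite polynomial in the same way. A point $\bfm$ lies in the stalactite of $\bfn$ if and only if $\bfn - \bfm = \bfe_K$ for a necessarily unique $K \subseteq S(\bfn)$, so $c_\bfm(\sP) = \#\{(\bfn, K) : \bfn \in \sP,\ K \subseteq S(\bfn),\ \bfn - \bfe_K = \bfm\}$; since $K \subseteq S(\bfn) \subseteq \supp(\bfn)$, each lowered point $\bfn - \bfe_K$ satisfies $0 \le \bfn - \bfe_K \le \bfn$ and hence lies in $I(\sP)$. Using $|\bfn - \bfe_K| = \rk(\sP) - |K|$, the sign $(-1)^{\rk(\sP) - |\bfm|}$ becomes $(-1)^{|K|}$, so reindexing the defining sum of $\Stal_\sP$ by the pairs $(\bfn, K)$ produces exactly the double sum computed above. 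Comparing the two expansions establishes $\cave_\sP(\bft) = \Stal_\sP(\bft)$.

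The main obstacle is the direction-set identification $J = S(\bfn)$: one must argue carefully that the previously listed neighbors of a top point in the chosen order are exactly those obtained by moving a unit of mass from a smaller index $i$ to a larger index $j$ with $\bfn - \bfe_i + \bfe_j \in \sP$, and confirm that replacing ``any lex order'' by the standard one is harmless via the stated order-independence of $\Stal_\sP$. Everything after this identification is a bookkeeping reindexing, whose only checks are that $S(\bfn) \subseteq \supp(\bfn)$ (so the lowered points remain in $I(\sP)$) and that for fixed $\bfn$ the correspondence $K \leftrightarrow \bfn - \bfe_K$ is a bijection onto the stalactite, which holds because $K = \supp(\bfn - \bfm)$ recovers $K$ from $\bfm$.
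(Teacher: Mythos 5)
Your proposal is correct and takes essentially the same route as the paper's own proof: both hinge on the observation that a neighbor $\bfn-\bfe_i+\bfe_j$ precedes $\bfn$ in the standard lex order exactly when $i<j$, so the direction set of the stalactite at $\bfn$ coincides with the set of indices where the cave polynomial's factor $1-t_i^{-1}$ is active, and both arguments defer lex-order independence to the later result $\Stal_\sP = \Mob_\sP$. Your explicit double sum over pairs $(\bfn,K)$ with matching signs $(-1)^{|K|}$ is just a more formal rendering of the paper's coefficient comparison.
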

 \begin{proof}
     Here we prove the statement for the stalactite polynomial computed using the standard lex order. In \autoref{thm:stal=mob} we will show that the stalactite polynomial is equal to the M\"obius polynomial, which does not depend on a lex ordering.  Order the elements of $B(\sP)$ as $\bfa_1 \prec \cdots \prec \bfa_r$ in the standard lex order. We show that the coefficient of $\bft^\bfm$ in the cave polynomial is equal to $(-1)^{\rk(\sP)-|\bfm|}c_\bfm(\sP)$. First we observe that the formula for the cave polynomial is cancellation-free, because the sign of each term is determined by $|\bfm|$, and the coefficient of $\bft^\bfm$ will have sign $(-1)^{\rk(\sP)-|\bfm|}$. The product $$\prod_{i=1}^{p-1}\left(1-\max_{i<j}\{\mathbbm{1}_{\sP}(\bfn-\bfe_i+\bfe_j)\}t_i^{-1}\right)\bft^\bfn$$ appears in the sum if and only if $\bfn \in B(\sP)$. The monomial $\bft^\bfm$ appears in the same product if and only if there exists a subset $J \subseteq [p]$ such that $\bfm = \bfn - \bfe_J$ and $\bfn$ has neighbors of the form $\bfn-\bfe_i+\bfe_j$, with $j > i$, for each $i \in J$. On the other hand, $\bfm \in \St(\bfa_k; \{\bfa_1, \ldots, \bfa_{k-1}\})$ if and only if there exists a subset $J \subseteq [p]$ such that $\bfm = \bfa_k-\bfe_J$ and $\bfa_k$ has neighbors of the form $\bfa_k-\bfe_i+\bfe_j \in \{\bfa_1, \ldots, \bfa_{k-1}\}$ for all $i \in J$. Notice that this means $\bfa_k-\bfe_i+\bfe_j \prec \bfa_k$ in the lex order, which is equivalent to the condition that $i < j$. So the coefficients of the two polynomials are equal. 
 \end{proof}

\begin{example}\label{ex:stal}
    For $\sP = \big\{(0,3), (1,2), (2,1)\big\}$, we can use the ordering $(0,3) \prec (1,2) \prec (2,1)$ and then we have 
    \begin{align*}
        \St\big((0,3);\varnothing\big) &= \{(0,3)\}.\\
        \St\big((1,2);\{(0,3)\}\big) &= \{(1,2), (0,2)\}.\\
        \St\big((2,1);\{(0,3), (1,2)\}\big) &= \{(2,1), (1,1)\}.
    \end{align*}
    So $\Stal_{\sP}(t_1,t_2) = t_2^3+t_1t_2^2-t_2^2+t_1^2t_2-t_1t_2$. 
    If we instead choose the ordering $(2,1) \prec (1,2) \prec (0,3)$, we get the following stalactites:
    \begin{align*}
        \St\big((2,1);\varnothing\big) &= \{(2,1)\}.\\
        \St\big((1,2);\{(2,1)\}\big) &= \{(1,2), (1,1)\}.\\
        \St\big((0,3);\{(2,1), (1,2)\}\big) &= \{(0,3), (0,2)\}.
    \end{align*}
Though the individual stalactites are different under this ordering, the stalactite polynomial is the same. By the definition of the cave polynomial, 
    \begin{align*}
        \cave_\sP(t_1,t_2) &= \sum_{\substack{\bfn \in \N^p \\ |\bfn| = \rk(\sP)}}\mathbb{1}_{\sP}(\bfn)\left(1-\mathbbm{1}_{\sP}(\bfn-\bfe_1+\bfe_2)t_1^{-1}\right)t_1^{n_1}t_2^{n_2}\\
        &= \mathbb{1}_{\sP}((0,3))\left(1-\mathbbm{1}_{\sP}((-1,4))t_1^{-1}\right)t_2^{3} + 
        \mathbb{1}_{\sP}((1,2))\left(1-\mathbbm{1}_{\sP}((0,3))t_1^{-1}\right)t_1^1t_2^2 \\
       & \quad  + \mathbb{1}_{\sP}((2,1))\left(1-\mathbbm{1}_{\sP}((1,2))t_1^{-1}\right)t_1^2t_2^1
        +\mathbb{1}_{\sP}((3,0))\left(1-\mathbbm{1}_{\sP}((2,1))t_1^{-1}\right)t_1^3\\
        &=t_2^3 + (1-t_1^{-1})t_1t_2^2 + (1-t_1^{-1})t_1^2t_2 +0\\
        &=t_2^3+t_1t_2^2-t_2^2+t_1^2t_2-t_1t_2.
    \end{align*}
\end{example}

\section{Formula for the M\"obius Function of a polymatroid}\label{mobius section}

In this section, we define and study the M\"obius function of a polymatroid. In particular, we prove an explicit formula for this M\"obius function which allows us to show that $\textrm{Box}_{\sP}(\bft) = \textrm{M\"ob}_{\sP}(\bft)$. We define a poset related to each polymatroid $\sP$. For background on M\"obius functions of posets we refer the reader to \cite[Section 3.7]{EC1}. Let $P$ be the poset with underlying set $I(\sP) \cap \N^p$ and a maximal element $\hat{1}$ adjoined, with relations given by the componentwise order. That is, $$\bfa = (a_1,\ldots, a_p) \le \bfb = (b_1, \ldots, b_p) \text{ if } a_i \le b_i \text{ for each } i \in [p].$$ Then the M\"obius function of $P$ is the function $\mu_P\colon \mathrm{Int}(P) \to \Z$ on the set of closed intervals of $P$, defined by the relation 
$$\mu_P(\bfm,\bfn) \coloneqq \begin{cases}
    1 &\text{ if }\bfm=\bfn,\\
    -\ds\sum_{\bfm \le \bfa < \bfn}\mu_P(\bfm, \bfa) &\text{ if }\bfm < \bfn,\\
    0 & \text{ if } \bfm \not\le \bfn.
\end{cases}$$

The only M\"obius values which appear as coefficients in the M\"obius polynomial are those of the form $\mu_P(\bfn, \hat{1})$. In fact the coefficient of $\bft^\bfn$ in $\Mob_{\sP}(\bft)$ is $-\mu_P(\bfn, \hat{1})$. For this reason we use the notation  $\mu_{\sP}(\bfn) \coloneqq -\mu_P(\bfn, \hat{1})$. From the relation above we get 
\begin{equation}\label{eqn}\mu_{\sP}(\bfn) = 
\begin{cases}
1 & \text{if } \bfn \in B(\sP),\\
1-\ds\sum_{\bfm >\bfn} \mu_{\sP}(\bfm) &\text{if } \bfn \in I(\sP) \setminus B(\sP),\\
0 & \text{if }\bfn \notin I(\sP). 
\end{cases}
\end{equation}

In order to compare the M\"obius values to the coefficients of the box polynomial, we give an explicit formula for this M\"obius function. 
\begin{theorem}\label{thm:mobius intervals} Let $\bfm \le \bfn < \hat{1}$ in $P$. Then  \[\mu_P(\bfm,\bfn) = \begin{cases}
    (-1)^j & \text{if } \bfm = \bfn-\bfe_{i_1}-\cdots - \bfe_{i_j} \text{ for some distinct } i_1, \ldots, i_j \in [p],\\
    0 & \text{otherwise.}
\end{cases}\]
\end{theorem}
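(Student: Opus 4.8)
The plan is to compute the Möbius function $\mu_P(\bfm, \bfn)$ for an interval $[\bfm, \bfn]$ with $\bfn < \hat 1$ by exploiting the fact that, once we stay strictly below the adjoined top element $\hat 1$, the poset $P$ is locally just a product of chains under the componentwise order. Concretely, for $\bfm \le \bfn < \hat 1$, the closed interval $[\bfm, \bfn]$ in $P$ coincides with the interval $[\bfm, \bfn]$ in $\N^p$ (or equivalently $\Z^p$), because every point $\bfa$ with $\bfm \le \bfa \le \bfn$ satisfies $\bfa \le \bfn < \hat 1$, hence lies in $I(\sP) \cap \N^p$ and never involves $\hat 1$. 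Thus the relevant sublattice is $\prod_{i=1}^p [m_i, n_i]$, a product of $p$ chains.

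**The key computation** then rests on the product formula for the Möbius function of a product of posets: $\mu$ of a product is the product of the $\mu$'s of the factors. For a single chain $[m_i, n_i]$, the Möbius function is the classical one, namely $\mu(m_i, n_i) = 1$ if $m_i = n_i$, $\mu(m_i, n_i) = -1$ if $n_i - m_i = 1$, and $0$ if $n_i - m_i \ge 2$. Multiplying these across $i \in [p]$, the product is nonzero precisely when each coordinate difference $n_i - m_i$ is either $0$ or $1$, in which case it equals $(-1)^{\#\{i : n_i - m_i = 1\}}$. Writing $\{i_1, \ldots, i_j\}$ for the set of coordinates where $\bfn - \bfm$ has a $1$, this says exactly $\bfm = \bfn - \bfe_{i_1} - \cdots - \bfe_{i_j}$ and $\mu_P(\bfm, \bfn) = (-1)^j$, with the value $0$ in all other cases. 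This matches the claimed formula.

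**The main obstacle** I anticipate is the careful justification that the interval $[\bfm, \bfn]$ in $P$ really is the full product of chains, i.e. that it does not meet the boundary of $I(\sP)$ or interact with $\hat 1$. Because $I(\sP)$ is the independence polytope, which is a lower set for the componentwise order (if $\bfa \le \bfb$ and $\bfb \in I(\sP)$ then $\bfa \in I(\sP)$), any $\bfa$ with $\bfm \le \bfa \le \bfn \le$ some point of $I(\sP)$ automatically lies in $I(\sP)$; I would state this order-ideal property explicitly and invoke it so that the interval genuinely is the lattice interval in $\N^p$. Once this containment is secured, the hypothesis $\bfn < \hat 1$ guarantees $\hat 1$ is excluded from the interval, and the product-of-chains argument applies verbatim.

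**Alternatively**, if one prefers to avoid citing the product formula as a black box, I would run a direct induction on $|\bfn| - |\bfm|$ using the defining recurrence $\mu_P(\bfm, \bfn) = -\sum_{\bfm \le \bfa < \bfn} \mu_P(\bfm, \bfa)$. One shows that the sum of $(-1)^{|\bfa - \bfm|}$ over all $\bfa$ in the lattice interval with $\bfa$ a $0/1$-perturbation of $\bfm$ below $\bfn$ telescopes, and that any $\bfa$ with some coordinate gap $\ge 2$ contributes $0$ by the inductive hypothesis; this reduces to the binomial identity $\sum_{k=0}^{d} (-1)^k \binom{d}{k} = 0$ for $d \ge 1$ in each coordinate. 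I expect the clean route to be the product formula, with the order-ideal observation doing the real work.
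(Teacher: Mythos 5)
Your proof is correct, but it takes a genuinely different route from the paper. The paper proves the formula by induction on the length $\ell$ of the interval $[\bfm,\bfn]$, splitting into two cases (whether or not $\bfm$ is obtained from $\bfn$ by subtracting distinct basis vectors) and reducing each to the alternating binomial identity $\sum_{j}(-1)^j\binom{\ell}{j}=0$ --- essentially the computation you sketch in your ``alternatively'' paragraph. Your primary argument instead isolates the structural reason behind the formula: since $I(\sP)\cap\N^p$ is a lower set for the componentwise order, the interval $[\bfm,\bfn]$ in $P$ (for $\bfn<\hat 1$) coincides with the full lattice interval $\prod_{i=1}^p[m_i,n_i]$ in $\N^p$, a product of chains, and since the M\"obius function of an interval depends only on that interval as a poset, the product formula $\mu(\bfm,\bfn)=\prod_i \mu(m_i,n_i)$ together with the classical chain values $1,-1,0$ gives the result immediately. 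Your approach is shorter and more conceptual, at the cost of invoking the product theorem for M\"obius functions as a black box (e.g.\ Stanley, \emph{Enumerative Combinatorics}, Vol.~1, Prop.~3.8.2); the paper's induction is longer but entirely self-contained. One point in your favor: you make explicit the order-ideal property of $I(\sP)$ that guarantees the interval does not get truncated by the boundary of the independence polytope --- the paper's induction silently relies on the same fact when it asserts that every $\bfm+\bfe_J$ with $J\subseteq\{i_1,\ldots,i_\ell\}$ lies in the interval, so all $2^\ell$ subsets contribute to the binomial sum.
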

\begin{proof}
Let $P$ be the poset described above, and let $\bfm \le \bfn < \hat{1}$ in $P$. Let $\ell$ denote the length of the interval $[\bfm, \bfn] \subseteq P$. We will induct on $\ell$. If $\ell=0$ then $\bfm=\bfn$, so $\mu_P(\bfm, \bfn) = 1 = (-1)^0$. If $\ell = 1$, then $\bfn = \bfm + \bfe_i$ for some $i \in [p]$. Substituting, we obtain $$\mu_P(\bfm, \bfn) = -\mu_P(\bfm, \bfm) = -1 = (-1)^1.$$ Now suppose $\ell \ge 1$ and that the formula is valid for intervals of length less than $\ell$. We have two cases. First suppose that $\bfm = \bfn - \bfe_{i_1} - \cdots - \bfe_{i_{\ell}}$ for some $\{i_1, \ldots, i_{\ell}\} \subseteq [p]$. Then  each element $\bfa$ in the interval $[\bfm, \bfn]$ is of the form $\bfa = \bfm + \bfe_J$ where $J \subseteq \{ i_1,\ldots,i_{\ell}\}$. Thus by induction,
\[\mu_P(\bfm, \bfn) = -\sum_{\bfm \le \bfa < \bfn}\mu_P(\bfm, \bfa) = -\sum_{J \subset \{i_1,\ldots,i_\ell\}}\mu_P(\bfm, \bfm+\bfe_J) = -\sum_{J \subset \{i_1,\ldots,i_\ell\}}(-1)^{|J|}=-\sum_{j=0}^{\ell-1}(-1)^j\binom{\ell}{j}.\]

Since the alternating sum of binomial coefficients is equal to zero, we have 

\[-\sum_{j=0}^{\ell-1}(-1)^j\binom{\ell}{j} =(-1)^\ell\binom{\ell}{\ell} = (-1)^{\ell}, \]
which concludes the first case. In the second case, we assume $\bfm \ne \bfn-\bfe_{i_1}-\cdots-\bfe_{i_\ell}$ for any subset $\{i_1, \ldots, i_\ell\} \subseteq [p]$. Let $A = \{i \in [p] \mid \bfm+\bfe_i < \bfn\}$. Notice that $|A| < \ell$. Let $B = \{\bfm + \bfe_J \in P \mid J \subseteq A\}$. If $\bfm \le \bfa < \bfn$ and $\bfa \notin B$, then $\mu_P(\bfm, \bfa) = 0$ by induction. So we have 
\[\mu_P(\bfm, \bfn) = -\sum_{\bfm\le\bfa<\bfn}\mu_P(\bfm, \bfa) = -\sum_{\bfa\in B}\mu_P(\bfm, \bfa) = -\sum_{J \subseteq A} (-1)^{|J|} = -\sum_{j=0}^{|A|} (-1)^j\binom{|A|}{j} = 0. \]
This completes the proof.
\end{proof}

\begin{example}\label{ex:mob}
    The polymatroid $\sP = \big\{(0,3), (1,2), (2,1)\big\}$ corresponds to the poset with the following Hasse diagram and M\"obius values: 
    \begin{multicols}{2}
$\begin{tikzpicture}[scale=0.7]
  \node (max) at (0,4) {$\hat{1}$};
  \node (a) at (-2,2) {$(0,3)$};
  \node (b) at (0,2) {$(1,2)$};
  \node (c) at (2,2) {$(2,1)$};
  \node (d) at (-2,0) {$(0,2)$};
  \node (e) at (0,0) {$(1,1)$};
  \node (f) at (2,0) {$(2,0)$};
  \node (g) at (-1,-2) {$(0,1)$};
  \node (h) at (1,-2) {$(1,0)$};
  \node (min) at (0,-4) {$(0,0)$};
  \draw (min) -- (g) -- (d) -- (a) -- (max);
  \draw (min) -- (h) -- (f) -- (c) -- (max);
  \draw (g) -- (e) -- (b) -- (max);
  \draw (h) -- (e) -- (c);
  \draw (d) -- (b);
\end{tikzpicture}$
\columnbreak 
\begin{align*} 
    \mu_\sP\big((0,3)\big) &= 1\\
    \mu_\sP\big((1,2)\big) &= 1\\
    \mu_\sP\big((2,1)\big) &= 1\\
    \mu_\sP\big((0,2)\big) &= -1\\
    \mu_\sP\big((1,1)\big) &= -1\\
    \mu_\sP\big((2,0)\big) &= 0\\
    \mu_\sP\big((0,1)\big) &= 0\\
    \mu_\sP\big((1,0)\big) &= 0\\
    \mu_\sP\big((0,0)\big) &= 0
\end{align*}
\end{multicols}

Therefore the M\"obius polynomial of $\sP$ is $\Mob_\sP(t_1,t_2) = t_2^3+t_1t_2^2+t_1^2t_2-t_2^2-t_1t_2$. 
\end{example}

As a consequence of \autoref{thm:mobius intervals}, we get the following result. 
\begin{theorem}\label{thm:box=mob}
    For any polymatroid $\sP$, we have $\mathrm{Box}_\sP(\bft) = \Mob_\sP(\bft)$. 
\end{theorem}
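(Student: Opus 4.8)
The plan is to prove the identity monomial by monomial: for every $\bfm \in \N^p$ I will compute the coefficient of $\bft^\bfm$ on each side and check that the two agree. Both polynomials are supported on $I(\sP) \cap \N^p$, so it suffices to treat $\bfm$ in this set; for $\bfm \notin I(\sP)$ the box polynomial produces no such monomial (every monomial it produces has the form $\bft^{\bfn-\bfe_S}$ with $\bfn-\bfe_S \in I(\sP)$, since $I(\sP)$ is downward closed), while $\mu_\sP(\bfm)=0$ by the last case of~\eqref{eqn}.

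First I would expand the box polynomial. Fix $\bfn \in I(\sP)$ and expand $\prod_{i=1}^p (t_i^{n_i}-t_i^{\max\{0,n_i-1\}})$: a coordinate with $n_i \ge 1$ offers the two terms $t_i^{n_i}$ and $-t_i^{n_i-1}$, while a coordinate with $n_i = 0$ contributes the factor $1$, so only the coordinates in $\supp(\bfn)$ provide a genuine choice. Hence the monomials arising from $\bfn$ are exactly $\bft^{\bfn-\bfe_S}$ for $S \subseteq \supp(\bfn)$, each carrying the sign $(-1)^{|S|}$. Reparametrizing by $\bfn = \bfm + \bfe_S$ and using that $S \subseteq \supp(\bfm+\bfe_S)$ holds automatically (each $i \in S$ has $(\bfm+\bfe_S)_i \ge 1$), the coefficient of $\bft^\bfm$ in $\mathrm{Box}_\sP(\bft)$ becomes
\[
\sum_{\substack{S \subseteq [p] \\ \bfm+\bfe_S \in I(\sP)}} (-1)^{|S|}.
\]

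Next I would evaluate $\mu_\sP(\bfm)$ via the recurrence~\eqref{eqn} and \autoref{thm:mobius intervals}. Since $\bfm \in I(\sP)\cap\N^p$ satisfies $\bfm < \hat{1}$ in $P$, the definition of the M\"obius function gives
\[
\mu_\sP(\bfm) = -\mu_P(\bfm,\hat{1}) = \sum_{\bfm \le \bfa < \hat{1}} \mu_P(\bfm,\bfa),
\]
and the elements $\bfa$ with $\bfm \le \bfa < \hat{1}$ are precisely the lattice points of $I(\sP)$ above $\bfm$. By \autoref{thm:mobius intervals}, $\mu_P(\bfm,\bfa) = 0$ unless $\bfa = \bfm+\bfe_J$ for a set $J$ of distinct indices, in which case it equals $(-1)^{|J|}$. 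Thus
\[
\mu_\sP(\bfm) = \sum_{\substack{J \subseteq [p] \\ \bfm+\bfe_J \in I(\sP)}} (-1)^{|J|},
\]
which is identical to the box coefficient found above, completing the argument.

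I do not expect a serious obstacle, since the proof ultimately reduces to recognizing two alternating sums over subsets as the same sum; the single delicate point is the treatment of the zero coordinates of $\bfn$ in the box expansion. A coordinate with $n_i = 0$ must contribute the factor $1$, so the subtracted sets range over $\supp(\bfn)$ rather than all of $[p]$, and it is exactly this support restriction that, after the substitution $\bfn = \bfm+\bfe_S$, becomes vacuous and leaves the \emph{unrestricted} alternating sum over $S$ that the M\"obius computation demands. Keeping this bookkeeping straight is the crux of the matching.
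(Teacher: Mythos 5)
Your proof is correct and takes essentially the same route as the paper's: both compare coefficients monomial by monomial, use \autoref{thm:mobius intervals} to identify the expansion of $\prod_i\bigl(t_i^{n_i}-t_i^{\max\{0,n_i-1\}}\bigr)$ with the interval M\"obius values, and expand $-\mu_P(\bfm,\hat{1})$ via the defining recurrence; your rewriting of both sides as alternating sums over subsets $S$ with $\bfm+\bfe_S\in I(\sP)$ is only a cosmetic repackaging of the paper's sum $\sum_{\bfn\in I(\sP)}\mu_P(\bfm,\bfn)$. Your extra care with the zero coordinates (reading a factor with $n_i=0$ as contributing $1$, as the paper's example and intended definition require) and with $\bfm\notin I(\sP)$ is sound and slightly more explicit than the paper, but does not change the argument.
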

\begin{proof}
    We will compare coefficients of the two polynomials. First, notice that, for a fixed $\bfn \in I(\sP)$, the coefficient of $\bft^{\mathbf{m}}$ in the product $$\prod_{i=1}^p \big(t_i^{n_i}-\min\{1, n_i\}t_i^{ n_i-1}\big)$$ is equal to $(-1)^{|J|}$ if $\bfm = \bfn-\bfe_J \text{ for some } J \subseteq [p]$ and $0$ otherwise. That is, the coefficient of $\bft^{\mathbf{m}}$ in the product is $\mu_P(\bfm, \bfn)$. Thus when we sum over the independence polytope, we get that the coefficient of $\bft^\bfm$ in $\mathrm{Box}_\sP(\bft)$ is $\sum_{\bfn \in I(\sP)}\mu_P(\bfm, \bfn)$. The coefficient of $\bft^\bfm$ in the M\"obius polynomial is $\mu_{\sP}(\bfm)$, by definition. We have $$\mu_\sP(\bfm) = -\mu_P(\bfm, \hat{1}) = \sum_{\bfm \le \bfn < \hat{1}} \mu_P(\bfm, \bfn).$$ Since $\mu_P(\bfm, \bfn) = 0$ for $\bfn \in I(\sP)$ with $\bfm \not\le \bfn$, this is the same as $\sum_{\bfn \in I(\sP)}\mu_P(\bfm, \bfn)$. Hence the two polynomials have the same coefficients, so they are equal.

\end{proof}

\begin{example}\label{ex:box}
    For $\sP = \big\{(0,3), (1,2), (2,1)\big\}$, we have 
    \begin{align*}\mathrm{Box}_\sP(t_1,t_2) &= \sum_{\bfn \in I(\sP)}\big(t_1^{n_1}-\min\{1, n_1\}t_1^{n_1-1}\big)\big(t_2^{n_2}-\min\{1, n_2\}t_2^{ n_2-1}\big)\\
    &= (1)(t_2^3-t_2^2)+(t_1-1)(t_2^2-t_2) + (t_1^2-t_1)(t_2-1) + (1)(t_2^2-t_2) + (t_1-1)(t_2-1)\\ & \quad + (t_1^2-t_1)(1) + (1)(t_2-1) + (t_1-1)(1) + (1)(1)\\
    &= t_2^3-t_2^2 + t_1t_2^2-t_1t_2+t_1^2t_2.
    \end{align*}
\end{example}

\section{Stalactites are counted by a M\"obius function}
In order to prove that $\rm{Stal}_{\sP}(\bft) =  \textrm{M\"ob}_{\sP}(\bft)$, we examine the coefficients of the stalactite polynomial. That is, we show that the number of stalactites containing some $\bfn \in I(\sP)$ is counted by the M\"obius function. Let $c'_\bfn(\sP)$ denote the signed number of stalactites of the polymatroid $\sP$ containing $\bfn$, i.e. $$c'_\bfn(\sP) = (-1)^{\rk(\sP)-|\bfn|}\#\Big\{i \mid \bfn \in \St\big(\bfa_i;\{\bfa_1,\ldots,\bfa_{i-1}\}\big)\Big\}.$$ Then $c'_\bfn(\sP)$ is also the coefficient of $\bft^\bfn$ in the stalactite polynomial. We show that, for each $\bfn \in I(\mathscr{P})$, $c'_\bfn(\mathscr{P}) = \mu_{\mathscr{P}}(\bfn)$. We first prove two lemmas that will simplify the proof of \autoref{thm:stal=mob}. 

\begin{lemma}\label{lem:lem1}
    Let $\mathscr{P}$ be a polymatroid (as in \autoref{def:Mconvex}) and $\bfn \in I(\mathscr{P})$. Then $\mathscr{P}_{\bfn} \coloneqq \mathscr{P} \cap (\bfn + \N^p)$ is a polymatroid (called the truncation of $\sP$ at $\bfn$).  
\end{lemma}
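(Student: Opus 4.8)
The plan is to check the two defining conditions of \autoref{def:Mconvex} directly for $\sP_\bfn = \sP \cap (\bfn + \N^p)$. Finiteness and homogeneity are immediate: $\sP_\bfn$ is a subset of the finite homogeneous set $\sP$, and any subset of a homogeneous set is again homogeneous. So the entire content of the lemma is the M-convex exchange property, and the guiding idea is that the single exchange step guaranteed by the M-convexity of $\sP$ automatically keeps us inside the shifted orthant $\bfn + \N^p$; nowhere do we need to leave $\sP$.

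Concretely, I would take $\bfu, \bfv \in \sP_\bfn$ and an index $i$ with $u_i > v_i$. Since $\bfu, \bfv \in \sP$ and $\sP$ is M-convex, there is an index $j$ with $u_j < v_j$ and $\bfu - \bfe_i + \bfe_j \in \sP$. Note first that $j \ne i$, since $u_i > v_i$ and $u_j < v_j$ cannot both hold at $j = i$. It then remains to verify that the point $\bfw = \bfu - \bfe_i + \bfe_j$ satisfies $\bfw \ge \bfn$ componentwise, so that $\bfw \in \bfn + \N^p$. For every coordinate $k \ne i$ we have $w_k \ge u_k \ge n_k$, because passing from $\bfu$ to $\bfw$ only adds $\bfe_j$ off the $i$-th slot and $\bfu \ge \bfn$. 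For the coordinate $i$ the exchange subtracts $1$, giving $w_i = u_i - 1$, and here the crucial inequality is $u_i > v_i \ge n_i$, which forces $u_i \ge n_i + 1$ and hence $w_i = u_i - 1 \ge n_i$. Thus $\bfw \in \sP \cap (\bfn + \N^p) = \sP_\bfn$, which is exactly the exchange axiom for $\sP_\bfn$.

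The only remaining point is nonemptiness, which I would handle by recalling the standard extension property of (integer) polymatroids: if $\bfn \in I(\sP) \cap \N^p$ with $|\bfn| < \rk(\sP)$, then submodularity shows some coordinate $i$ lies in no $\bfn$-tight set, so $\bfn + \bfe_i \in I(\sP)$; iterating produces a point $\bfu \in \sP$ with $\bfu \ge \bfn$, whence $\sP_\bfn \ne \varnothing$. I do not anticipate any genuine obstacle in this lemma: the one delicate step is the coordinate-$i$ inequality $u_i - 1 \ge n_i$, and this is precisely where the strict hypothesis $u_i > v_i$ (not merely $u_i \ge v_i$) combines with $\bfv \ge \bfn$ to do the work; every other step is either a subset argument or an invocation of the M-convexity of $\sP$ itself.
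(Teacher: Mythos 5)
Your proof is correct and takes essentially the same approach as the paper's: finiteness and homogeneity are inherited from $\sP$, and the exchange point $\bfu-\bfe_i+\bfe_j$ stays in $\bfn+\N^p$ precisely because $u_i > v_i \ge n_i$ forces $u_i - 1 \ge n_i$. Your additional paragraph on nonemptiness is sound but not required for the statement (the empty set vacuously satisfies M-convexity, and the paper's proof omits this point entirely).
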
 

\begin{proof}
    Since $\sP_{\bfn}$ is a subset of the finite homogeneous set $\sP$, we know that $\sP_{\bfn}$ is finite and homogeneous as well. Thus, it suffices to check M-convexity of $\sP_{\bfn}$. Suppose $\bfu,\bfv \in \sP_{\bfn}$ with $u_i > v_i$. Then the same is true in $\sP$, so by M-convexity of  $\mathscr{P}$ there is an index $j \in [p]$ such that $u_j<v_j$ and $\bfu-\bfe_i+\bfe_j \in \mathscr{P}$. All we need to check is that $ \bfu-\bfe_i+\bfe_j \in \sP_{\bfn}$, i.e., that $\bfu-\bfe_i+\bfe_j \ge \bfn$ in the componentwise order. Since $\bfu \in \sP_{\bfn}$ by assumption, we know $\bfu \ge \bfn$. The only entry we need to check is the $i$th. Since $\bfv \in \sP_{\bfn}$ and $u_i > v_i$, we have that $u_i-1 \ge v_i \ge n_i$. Thus $\bfu-\bfe_i+\bfe_j \ge \bfn$, as we wanted. 
\end{proof}

\begin{lemma}\label{lem:lem2}
    Let $\mathscr{P}$ and $\sP_{\bfn}$ be as above. Then for all $\bfm \in \sP_{\bfn}$, we have $c'_\bfm(\mathscr{P}) = c'_\bfm(\sP_{\bfn})$. 
\end{lemma}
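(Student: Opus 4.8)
The plan is to show that the stalactite structure of $\sP$, when restricted to points lying weakly above $\bfn$, is captured entirely by the truncation $\sP_\bfn$. The key observation is that $\bfm \in \sP_\bfn$ means $\bfm \ge \bfn$, so any stalactite $\St(\bfa;\{\bfa_1,\ldots,\bfa_{k-1}\})$ of $\sP$ that contains $\bfm$ must have its apex $\bfa$ satisfying $\bfa \ge \bfm \ge \bfn$, whence $\bfa \in \sP_\bfn$. Thus the apexes relevant to $\bfm$ are exactly the top elements of $\sP_\bfn$ (which is a polymatroid by \autoref{lem:lem1}, so $\sP_\bfn = B(\sP_\bfn)^{\mathrm{top}}$). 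The task reduces to comparing, for such an apex $\bfa$, whether $\bfm \in \St(\bfa; V)$ when $V$ ranges over the predecessors of $\bfa$ in $\sP$ versus the predecessors of $\bfa$ in $\sP_\bfn$.

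**First I would** unwind the definition of $\St(\bfa;V)$: it is $\St(\bfa;J)$ where $J$ is the set of indices $\ell$ such that $\bfa$ has a neighbor $\bfa-\bfe_\ell+\bfe_j \in V$ in the $(-\ell,j)$ direction. So the stalactite at apex $\bfa$ depends only on which lex-earlier neighbors of $\bfa$ are present. The crucial point is that a neighbor $\bfa - \bfe_\ell + \bfe_j$ of $\bfa$ (with $\bfa \in \sP_\bfn$) lies in $\sP$ if and only if it lies in $\sP_\bfn$ — \emph{provided} it lies weakly above $\bfn$. I would verify this exactly as in the proof of \autoref{lem:lem1}: since $\bfa \ge \bfn$ and the move only decreases coordinate $\ell$ by one, one checks $a_\ell - 1 \ge n_\ell$ using the presence of a point $\bfv \in \sP_\bfn$ with $v_\ell < a_\ell$ (the very neighbor-witness). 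This shows the index set $J$ computed for $\bfa$ within $\sP$ coincides with the one computed within $\sP_\bfn$.

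**The main obstacle** I anticipate is handling the lex ordering carefully: $c'_\bfm$ is defined via an ordering $\bfa_1 \prec \cdots \prec \bfa_r$ of $B(\sP)$, and the counting uses $\St(\bfa_i; \{\bfa_1,\ldots,\bfa_{i-1}\})$, i.e. neighbors that are lex-\emph{earlier}. Restricting to $\sP_\bfn$ induces a lex order on $B(\sP_\bfn)$ which is the restriction of the order on $B(\sP)$. I must check that for an apex $\bfa \in \sP_\bfn$, the lex-earlier neighbors of $\bfa$ lying in $\sP_\bfn$ are precisely the lex-earlier neighbors lying in $\sP$ that happen to be $\ge \bfn$; combined with the previous paragraph (earlier neighbors of $\bfa$ are automatically $\ge \bfn$), this makes the index sets identical. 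Since $\St(\bfa;J)$ is intrinsic to $\bfa$ and $J$ once $J$ is fixed, and since $\bfm \ge \bfn$ forces every element of a stalactite through $\bfm$ with apex $\bfa \ge \bfn$ to be counted identically, the membership $\bfm \in \St(\bfa_i;\{\ldots\})$ agrees in both polymatroids. Because $\rk(\sP_\bfn)-|\bfm|$ equals $|\bfa|-|\bfm|$ for the common apex height, the sign $(-1)^{\rk(\sP)-|\bfm|}$ must also be reconciled; I would note that the signs agree because in both counts the exponent is determined by $|\bfm|$ and the common top-degree, which differ by the same shift, or more cleanly because the stalactite polynomial's sign is intrinsic to the degree drop within a single stalactite.

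**Finally I would** conclude that the multiset of pairs (apex, stalactite) containing $\bfm$ is the same whether computed in $\sP$ or $\sP_\bfn$, hence the signed counts $c'_\bfm(\sP)$ and $c'_\bfm(\sP_\bfn)$ coincide. I expect the subtle bookkeeping around the sign normalization $(-1)^{\rk(\sP)-|\bfm|}$ versus $(-1)^{\rk(\sP_\bfn)-|\bfm|}$ to require a remark: since all apexes have the same degree $\rk(\sP)$ in $\sP$ and the same degree $\rk(\sP_\bfn)$ in $\sP_\bfn$, and stalactites only contain points of degree $\le$ their apex, the actual contribution to the coefficient of $\bft^\bfm$ is governed by the degree difference inside each stalactite, which is apex-height minus $|\bfm|$ and is therefore identical in both settings.
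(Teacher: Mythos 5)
Your overall skeleton (apexes of stalactites through $\bfm$ lie in $\sP_\bfn$; the lex order on $\sP_\bfn$ is the restriction of that on $\sP$; ranks agree so the signs agree) matches the paper's proof, but your central step is false as stated. You claim that for an apex $\bfa \in \sP_\bfn$ the index set of directions in which $\bfa$ has lex-earlier neighbors is the same whether computed in $\sP$ or in $\sP_\bfn$. Counterexample: take $\sP = \{(1,1,0),(1,0,1),(0,1,1)\}$ and $\bfn = (1,0,0)$, so $\sP_\bfn = \{(1,1,0),(1,0,1)\}$, and consider the apex $\bfa = (1,1,0)$ (lex-last in both). In $\sP$ its lex-earlier neighbors are $(0,1,1) = \bfa - \bfe_1+\bfe_3$ and $(1,0,1) = \bfa - \bfe_2 + \bfe_3$, giving index set $\{1,2\}$; in $\sP_\bfn$ the neighbor $(0,1,1)$ disappears (its first coordinate is $0 < n_1$), giving index set $\{2\}$. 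The two stalactites at $\bfa$ genuinely differ: $(0,1,0)$ lies in the one computed in $\sP$ but not in the one computed in $\sP_\bfn$. Relatedly, your proposed verification of the claim is circular: you want to deduce $a_\ell - 1 \ge n_\ell$ from ``the presence of a point $\bfv \in \sP_\bfn$ with $v_\ell < a_\ell$ (the very neighbor-witness),'' but that witness is only known to lie in $\sP$; whether it lies in $\sP_\bfn$ is exactly what you are trying to prove. In \autoref{lem:lem1} the analogous witness is in $\sP_\bfn$ by hypothesis, so the argument does not transfer.

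The repair --- which is the paper's actual argument --- is more local: you never need the full index sets to agree, only the directions in which $\bfm$ itself drops. Write $\bfm = \bfa - \bfe_J$. For $\ell \in J$ you have $m_\ell = a_\ell - 1$, so every neighbor $\bfa - \bfe_\ell + \bfe_j$ with $\ell \in J$ satisfies $\bfa - \bfe_\ell + \bfe_j \ge \bfm \ge \bfn$ coordinatewise; the inequality comes directly from $\bfm \ge \bfn$, with no witness needed. Hence those neighbors survive truncation, each $\ell \in J$ lies in the index set computed in $\sP_\bfn$, and $\bfm$ belongs to the stalactite at $\bfa$ in $\sP_\bfn$. (As the example shows, the stalactites at a common apex can still differ, but only in points not lying above $\bfn$, which the lemma does not see.) The converse direction is immediate since $\sP_\bfn \subseteq \sP$ and predecessors in $\sP_\bfn$ are predecessors in $\sP$. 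Your sign discussion is correct but can be collapsed to one line: every point of $\sP_\bfn$ is a point of $\sP$, so $\rk(\sP_\bfn) = \rk(\sP)$ and the normalizations $(-1)^{\rk(\sP)-|\bfm|}$ and $(-1)^{\rk(\sP_\bfn)-|\bfm|}$ coincide exactly.
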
 
\begin{proof}
    Let $\bfm \in \sP_{\bfn}$. We show that the stalactites which contain $\bfm$ in $\sP$ also contain $\bfm$ in $\sP_{\bfn}$. First, note that $\sP_{\bfn} \coloneqq \sP \cap (\bfn + \N^p)$ must contain $ \sP \cap (\bfm + \N^p)$. Therefore, in order to show that an element of $\sP$ is in $\sP_{\bfn}$, it suffices to show that the element is greater than or equal to $\bfm$ in the componentwise order. Suppose that $\bfm \in St(\bfa_i;\{\bfa_1,\ldots,\bfa_{i-1}\})$ in $\mathscr{P}$. Then $\bfm = \bfa_i-\bfe_J$ for some $J \subseteq [p]$, so $\bfa_i \ge \bfm$ and thus $\bfa_i \in \sP_\bfn$. The only neighbors of $\bfa_i$ which must be in $\sP_{\bfn}$ in order for $\bfm$ to be in the stalactite in $\sP_{\bfn}$ are those in the $(-\ell,j)$ directions for $\ell \in J$. Each such neighbor is of the form $\bfa_i - \bfe_\ell+\bfe_j$. We write $\bfa_i = (a_{i,1},\ldots, a_{i, p})$. Since $\bfa_i \ge \bfm$, we need only show that $a_{i,\ell}-1 \ge m_\ell$. In fact, since $\ell \in J$, we have $m_\ell = a_{i,\ell}-1$. Therefore $\bfa_i$ and its neighbors in the necessary directions are all in $\sP_{\bfn}$, so $\bfm$ is in the same number of stalactites in $\sP_{\bfn}$ as it is in $\sP$. 
\end{proof}

We can now prove the main result of this section. 
\begin{theorem}\label{thm:stal=mob}
    Let $\sP$ be a polymatroid. Then $\Stal_\sP(\bft) = \Mob_\sP(\bft)$. 
\end{theorem}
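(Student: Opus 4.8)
The plan is to compare coefficients. Write $c'_\bfn(\sP)$ for the coefficient of $\bft^\bfn$ in $\Stal_\sP(\bft)$, the signed stalactite count defined above, and recall that the coefficient of $\bft^\bfn$ in $\Mob_\sP(\bft)$ is $\mu_\sP(\bfn)$; so it suffices to prove $c'_\bfn(\sP)=\mu_\sP(\bfn)$ for every $\bfn$. Since \eqref{eqn} determines $\mu_\sP$ uniquely by a downward recursion terminating on $B(\sP)$, I would establish equality by showing that $c'_\bfn(\sP)$ obeys the same recurrence and then inducting on $\rk(\sP)-|\bfn|$ (every $\bfm>\bfn$ appearing in the recurrence has strictly smaller index).

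The two boundary cases are immediate. If $\bfn\in B(\sP)$ then $|\bfn|=\rk(\sP)$, so $\bfn$ can occur only as the top ($J=\varnothing$) point of a stalactite, and the unique stalactite with top $\bfn$ is the only one containing it; as the sign is $(-1)^0=1$, we get $c'_\bfn(\sP)=1$. If $\bfn\notin I(\sP)$, then since every stalactite point $\bfa-\bfe_J$ satisfies $0\le\bfa-\bfe_J\le\bfa\in B(\sP)$ and hence lies in $I(\sP)$, no stalactite contains $\bfn$ and $c'_\bfn(\sP)=0$. Both values match \eqref{eqn}.

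The heart of the argument is the case $\bfn\in I(\sP)\setminus B(\sP)$, where I must show $\sum_{\bfm\ge\bfn}c'_\bfm(\sP)=1$, equivalently $c'_\bfn(\sP)=1-\sum_{\bfm>\bfn}c'_\bfm(\sP)$. This is exactly where the truncation $\sP_\bfn$ of \autoref{lem:lem1} enters. By \autoref{lem:lem2}, a point $\bfm\ge\bfn$ lies in the same stalactites whether computed in $\sP$ or in $\sP_\bfn$, and $\rk(\sP_\bfn)=\rk(\sP)$ because $\sP_\bfn\subseteq\sP$, so the signs agree and $\sum_{\bfm\ge\bfn}c'_\bfm(\sP)=\sum_{\bfm\ge\bfn}c'_\bfm(\sP_\bfn)$. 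I would then observe that every point of every stalactite of $\sP_\bfn$ already lies in $\bfn+\N^p$: a leg direction $\ell$ of a top $\bfa$ is used only when $\bfa$ has a neighbor $\bfa-\bfe_\ell+\bfe_j\in\sP_\bfn$, which forces $a_\ell-1\ge n_\ell$, so removing $\bfe_\ell$ never pushes the $\ell$th coordinate below $n_\ell$. Hence the restricted sum equals the full signed count $\Stal_{\sP_\bfn}(\mathbf 1)$.

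It remains to prove the clean identity $\Stal_{\mathscr{Q}}(\mathbf 1)=1$ for an arbitrary polymatroid $\mathscr{Q}$. Each stalactite $\St(\bfa;S)$ contributes $\sum_{J\subseteq S}(-1)^{|J|}$ to $\Stal_{\mathscr{Q}}(\mathbf 1)$, because $\rk(\mathscr{Q})-|\bfa-\bfe_J|=|J|$, and this alternating sum vanishes unless $S=\varnothing$. Exactly one top element has empty leg set, namely the lexicographically smallest base point: for any other top $\bfa$, comparing it with the lex-minimal base point $\bfv$ at their first differing coordinate $\ell$ (where necessarily $a_\ell>v_\ell$) and applying M-convexity yields an index $j$ with $a_j<v_j$ and $\bfa-\bfe_\ell+\bfe_j\in\mathscr{Q}$; since all earlier coordinates agree we must have $j>\ell$, so this neighbor is lex-smaller than $\bfa$ and exhibits a nonempty leg set. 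Therefore $\Stal_{\sP_\bfn}(\mathbf 1)=1$, the recurrence for $c'$ coincides with \eqref{eqn}, and $\Stal_\sP(\bft)=\Mob_\sP(\bft)$ follows. I expect the main obstacle to be this fourth step together with the matching supplied by \autoref{lem:lem2}: confirming that truncation confines all of the stalactites of $\sP_\bfn$ to $\bfn+\N^p$, and that stalactite membership (with the correct sign) is genuinely preserved under passing from $\sP$ to $\sP_\bfn$.
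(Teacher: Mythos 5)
Your proposal is correct and follows essentially the same route as the paper's proof: verify the two boundary cases of the recurrence \eqref{eqn}, use \autoref{lem:lem1} and \autoref{lem:lem2} to reduce the case $\bfn \in I(\sP)\setminus B(\sP)$ to the identity $\sum_{\bfm} c'_\bfm(\mathscr{Q}) = 1$ for an arbitrary polymatroid $\mathscr{Q}$, and prove that identity stalactite-by-stalactite, with the alternating sum over leg subsets vanishing unless the leg set is empty, which by the M-convexity/lex-minimality argument happens only for the lex-smallest base point. Your write-up is in fact slightly more careful than the paper's at one point, where you explicitly check that every point of every stalactite of $\sP_\bfn$ lies in $\bfn+\N^p$ and that $\rk(\sP_\bfn)=\rk(\sP)$, details the paper leaves implicit.
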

\begin{proof}
We show that the coefficients $c'_\bfn(\sP)$ of the stalactite polynomial obey the same relation as $\mu_\sP(\bfn)$ (see \autoref{eqn}). For any $\bfa \in B(\sP)$, let $\St(\bfa) \coloneqq \St(\bfa; \{\bfb \in B(\sP) \mid \bfb \prec \bfa\})$, where $\prec$ denotes a lex ordering. If $\bfn \in B(\sP)$, then $\bfn \in \St(\bfn)$ and $\bfn$ cannot be in any other stalactites, so $c'_\bfn(\sP) = 1$. If $\bfn \notin I(\sP)$, then $\bfn$ is in no stalactites, since each element of a stalactite is of the form $\bfu-\bfe_J$, for some $\bfu \in B(\sP)$ and $J \subseteq [p]$. Thus in this case $c'_\bfn(\sP) = 0$. It remains to show that if $\bfn \in I(\sP) \setminus B(\sP)$, then $c'_\bfn(\sP) = 1-\sum_{\bfm>\bfn}c_\bfm'(\sP)$, i.e. $\sum_{\bfm\ge\bfn}c_\bfm'(\sP) = 1$. Let $\sP_{\bfn}$ be the truncation of $\sP$ at $\bfn$. Then, since $I(\sP_\bfn) = I(\sP) \cap (\bfn + \R_{\ge 0}^p)$, and $c_{\bfm}'(\sP) = 0$ for $\bfm \notin I(\sP)$, the sum can be written over the independence polytope of $\sP_\bfn$. By \autoref{lem:lem2}, $\sum_{\bfm \in I(\sP_\bfn)}c_\bfm'(\sP) = \sum_{\bfm \in I(\sP_{\bfn})}c_\bfm'(\sP_{\bfn})$. Thus it suffices to show that for any polymatroid $\sP$, we have $\sum_{\bfn \in I(\sP)} c'_\bfn(\sP) = 1$. 

Let $c'_{\bfm, \bfu}(\sP)$ denote the contribution of $\St(\bfu)$ to $c'_\bfm$, that is, $$c'_{\bfm, \bfu}(\sP) \coloneqq \begin{cases}
    1 &\text{if } \bfm \in \St(\bfu) \text{ and } |\bfu| - |\bfm| \text{ is even,}\\
    -1 &\text{if } \bfm \in \St(\bfu) \text{ and } |\bfu| - |\bfm| \text{ is odd,}\\
    0 &\text{if } \bfm \notin \St(\bfu). 
\end{cases}$$

Then $c'_\bfm(\sP) = \sum_{\bfu \in B(\sP)} c'_{\bfm, \bfu}(\sP)$. Therefore 
\[\sum_{\bfn \in I(\sP)} c'_\bfn(\sP) = \sum_{\bfn \in I(\sP)}\sum_{\bfu \in B(\sP)} c'_{\bfn, \bfu}(\sP) = \sum_{\bfu \in B(\sP)}\sum_{\bfn \in I(\sP)} c'_{\bfn, \bfu}(\sP).\]
Let $B(\sP) = \bfa_1 \prec \bfa_2 \prec \cdots \prec \bfa_n$. We show that $\sum_{\bfn \in I(\sP)} c'_{\bfn, \bfa_1}(\sP) = 1$ and $\sum_{\bfn \in I(\sP)} c'_{\bfn, \bfa_i}(\sP) = 0$ for $i > 1$.
First, $\St(\bfa_1) = \St(\bfa_1; \varnothing) = \bfa_1$, so $c'_{\bfa_1,\bfa_1}(\sP) = 1$, and $c'_{\bfm, \bfa_1}(\sP) = 0$ for all other $\bfm \in I(\sP)$. Thus $\sum_{\bfn \in I(\sP)} c'_{\bfn, \bfa_1}(\sP) = 1$. 

Now let $i>1$ and consider $\St(\bfa_i)$. First we show that $\bfa_i$ has at least one neighbor in $\{\bfa_1, \ldots, \bfa_{i-1}\}$, i.e. there is $j<i$ such that $\bfa_j = \bfa_i - \bfe_{\ell}+\bfe_k$. Since $\bfa_1 \prec \bfa_i$, there is some $r \in [p]$ with $a_{1,r} < a_{i,r}$. Pick the least such $r$. Then $a_{1,q} = a_{i,q}$ for all $q < r$. Now by M-convexity of $\sP$, there exists $s \in [p]$ such that $a_{1,s} > a_{i,s}$ and $\bfa_i - \bfe_r + \bfe_s \in \sP$. Notice that $s > r$, and thus $\bfa_i-\bfe_r+\bfe_s \prec \bfa_i$. So $\bfa_i$ has at least one neighbor in $\{\bfa_1, \ldots, \bfa_{i-1}\}$. Let $J$ be the set of directions in which $\bfa_i$ has neighbors in $\{\bfa_1, \ldots, \bfa_{i-1}\}$. Then 
\[\sum_{\bfn \in I(\sP)} c'_{\bfn, \bfa_i}(\sP) = \sum_{\bfn \in \St(\bfa_i)} c'_{\bfn, \bfa_i}(\sP) = \sum_{J' \subseteq J} (-1)^{|J'|}.\]
Since $J$ is nonempty, this sum is equal to 0, as desired. 

\end{proof}

By combining \autoref{thm:cave=stal}, \autoref{thm:box=mob}, and \autoref{thm:stal=mob}, we obtain the main result: 

\begin{theorem}
    For any polymatroid $\sP$, we have 
    \[\cave_{\sP}(\bft) = {\rm Stal}_{\sP}(\bft) = {\rm Box}_{\sP}(\bft) = \text{\rm M\"ob}_{\sP}(\bft).\]
\end{theorem}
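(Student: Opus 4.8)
The plan is to observe that this final theorem is purely a collation of the three pairwise equalities established earlier, so no new argument is needed beyond assembling them into a single chain. The entire content of the main result has been distributed across \autoref{thm:cave=stal}, \autoref{thm:box=mob}, and \autoref{thm:stal=mob}, each of which isolates one edge of the ``square'' of four polynomials, and the task here is simply to verify that these edges suffice to connect all four.

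Concretely, I would proceed as follows. First, \autoref{thm:cave=stal} supplies $\cave_\sP(\bft) = \Stal_\sP(\bft)$, giving the first equality. Next, \autoref{thm:stal=mob} supplies $\Stal_\sP(\bft) = \Mob_\sP(\bft)$, which bridges the combinatorial stalactite count to the M\"obius recurrence of \autoref{eqn}. Finally, \autoref{thm:box=mob} supplies $\mathrm{Box}_\sP(\bft) = \Mob_\sP(\bft)$. Chaining these three identities yields
\[
\cave_\sP(\bft) = \Stal_\sP(\bft) = \Mob_\sP(\bft) = \mathrm{Box}_\sP(\bft),
\]
which is exactly the claimed string of equalities (reordered). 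Since all four polynomials lie in $\Q[t_1,\ldots,t_p]$ and equality of polynomials is transitive, the chain is valid verbatim.

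There is no genuine obstacle at this stage: the difficulty was front-loaded into the three component theorems, where the real work lives. If I were deciding how to partition the labor, the conceptual heart would be \autoref{thm:stal=mob}, since it is the one place where a combinatorial quantity (the signed stalactite count $c'_\bfn(\sP)$, which depends \emph{a priori} on a lex order) is shown to satisfy the order-independent M\"obius recurrence, thereby also certifying that $\Stal_\sP$ is well defined independent of the ordering used in \autoref{thm:cave=stal}. By contrast, \autoref{thm:box=mob} is a direct coefficient comparison powered by the explicit formula of \autoref{thm:mobius intervals}. The only care required in writing the final proof is to cite the three theorems in an order that makes the transitivity transparent and to confirm that the reordering from $\cave = \Stal = \mathrm{Box} = \Mob$ (as stated) to the chain $\cave = \Stal = \Mob = \mathrm{Box}$ (as proved) changes nothing, which it does not.
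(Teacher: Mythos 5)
Your proposal is correct and matches the paper's own proof exactly: the paper likewise obtains this theorem by simply combining \autoref{thm:cave=stal}, \autoref{thm:box=mob}, and \autoref{thm:stal=mob} via transitivity of equality. Your additional observation that \autoref{thm:stal=mob} is what certifies the lex-order independence used in \autoref{thm:cave=stal} is also consistent with the paper's treatment.
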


\section*{Acknowledgements}
The author would like to thank Yairon Cid-Ruiz and Jacob Matherne for helpful feedback and many productive discussions, and the anonymous reviewers for their thoughtful comments and suggestions.

\bibliographystyle{amsalpha}
\bibliography{references.bib}

@article{CCRMM,
	title={K-polynomials of multiplicity-free varieties},
	author={Castillo, Federico and Cid-Ruiz, Yairon and Mohammadi, Fatemeh and Monta{\~n}o, Jonathan},
	journal={arXiv preprint arXiv:2212.13091},
	year={2022}
}

@book {EC1,
    AUTHOR = {Stanley, Richard P.},
     TITLE = {Enumerative combinatorics. {V}olume 1},
    SERIES = {Cambridge Studies in Advanced Mathematics},
    VOLUME = {49},
   EDITION = {Second},
 PUBLISHER = {Cambridge University Press, Cambridge},
      YEAR = {2012},
     PAGES = {xiv+626},
      ISBN = {978-1-107-60262-5},
   MRCLASS = {05-02 (05A15 06-02)},
  MRNUMBER = {2868112},
}

@ARTICLE{CRMS,
       author = {{Cid-Ruiz}, Yairon and {Matherne}, Jacob P. and {Shapiro}, Anna},
        title = "{Syzygies of polymatroidal ideals}",
      journal = {arXiv preprint arXiv:2507.13153},
         year = {2025}
}

@Article{Knutson,
  author  = {Knutson, Allen},
  title   = {Frobenius splitting, point-counting, and degeneration},
  journal = {arXiv preprint arXiv:0911.4941},
  year    = {2009},
}

@article{EL23,
  title={K-theoretic positivity for matroids},
  author={Eur, Christopher and Larson, Matt},
  journal={arXiv preprint arXiv:2311.11996},
  note={to appear in Journal of Algebraic Geometry},
  year={2023}
}

@article {HH2002,
    AUTHOR = {Herzog, J\"urgen and Hibi, Takayuki},
     TITLE = {Discrete polymatroids},
   JOURNAL = {J. Algebraic Combin.},
  FJOURNAL = {Journal of Algebraic Combinatorics. An International Journal},
    VOLUME = {16},
      YEAR = {2002},
    NUMBER = {3},
     PAGES = {239--268},
      ISSN = {0925-9899,1572-9192},
   MRCLASS = {52B40 (05B35 13H10 14M25)},
  MRNUMBER = {1957102},
MRREVIEWER = {Edward\ B.\ Swartz},
       DOI = {10.1023/A:1021852421716},
       URL = {https://doi.org/10.1023/A:1021852421716},
}

@incollection {Brion,
    AUTHOR = {Brion, Michel},
     TITLE = {Multiplicity-free subvarieties of flag varieties},
 BOOKTITLE = {Commutative algebra ({G}renoble/{L}yon, 2001)},
    SERIES = {Contemp. Math.},
    VOLUME = {331},
     PAGES = {13--23},
 PUBLISHER = {Amer. Math. Soc., Providence, RI},
      YEAR = {2003},
      ISBN = {0-8218-3233-6},
   MRCLASS = {14M15 (14L30 14N15)},
  MRNUMBER = {2011763},
MRREVIEWER = {Christian\ Ohn},
       DOI = {10.1090/conm/331/05900},
       URL = {https://doi.org/10.1090/conm/331/05900},
}

@article{LLPP,
  title={K-rings of wonderful varieties and matroids},
  author={Larson, Matt and Li, Shiyue and Payne, Sam and Proudfoot, Nicholas},
  journal={Advances in Mathematics},
  volume={441},
  pages={109554},
  year={2024},
  publisher={Elsevier}
}

@article {ConcaTsakiris,
    AUTHOR = {Conca, Aldo and Tsakiris, Manolis C.},
     TITLE = {Resolution of ideals associated to subspace arrangements},
   JOURNAL = {Algebra Number Theory},
  FJOURNAL = {Algebra \& Number Theory},
    VOLUME = {16},
      YEAR = {2022},
    NUMBER = {5},
     PAGES = {1121--1140},
      ISSN = {1937-0652,1944-7833},
   MRCLASS = {13D02},
  MRNUMBER = {4471038},
MRREVIEWER = {Jorge\ Neves},
       DOI = {10.2140/ant.2022.16.1121},
       URL = {https://doi.org/10.2140/ant.2022.16.1121},
}

@book {Schrijver,
    AUTHOR = {Schrijver, Alexander},
     TITLE = {Combinatorial optimization. {P}olyhedra and efficiency. {V}ol.
              {B}},
    SERIES = {Algorithms and Combinatorics},
    VOLUME = {24,B},
      NOTE = {Matroids, trees, stable sets,
              Chapters 39--69},
 PUBLISHER = {Springer-Verlag, Berlin},
      YEAR = {2003},
     PAGES = {i--xxxiv and 649--1217},
      ISBN = {3-540-44389-4},
   MRCLASS = {90-02 (05-02 52B55 68Q25 68R10 90C27 90C35 90C57)},
  MRNUMBER = {1956925},
MRREVIEWER = {Alexander\ I.\ Barvinok},
}

\end{document}